\documentclass[10pt,english]{article}
\usepackage[T1]{fontenc}
\usepackage[latin9]{inputenc}
\usepackage{color}
\usepackage{amsthm}
\usepackage{amssymb}
\usepackage{graphicx}
\usepackage{setspace}
\usepackage{amsmath,amssymb}
\usepackage{bbm}
\usepackage{enumitem}
\usepackage{float}

\pagestyle{headings}

\usepackage{pgf,tikz}
\usepackage{mathrsfs}
\usetikzlibrary{arrows}
\setlength{\topmargin}{.0in}
\setlength{\oddsidemargin}{.1in}
\setlength{\evensidemargin}{.0in}
\setlength{\textwidth}{6.1in} \setlength{\textheight}{8.7in}
\usetikzlibrary{decorations.pathreplacing}

\makeatletter
\numberwithin{equation}{section}
\numberwithin{figure}{section}

\newcommand\rank{\operatorname{rank}}
\newcommand\CF{{\mathcal{F}}}
\newcommand\CP{{\mathcal{P}}}
\newcommand\CS{{\mathcal{S}}}
\newcommand\CM{{\mathcal{M}}}

\newcommand\R{{\mathbb{R}}}

\newcommand\Z{{\mathbf{Z}}}

\newcommand\I{{\mathbf{I}}}
\renewcommand\P{{\mathbb{P}}}

\newcommand\Span{\operatorname{span}}

\newcommand\rowsp{\operatorname{rowsp}}
\newcommand\row{\operatorname{row}}

\makeatother

\theoremstyle{plain}
\newtheorem{theorem}{Theorem}[section]
\theoremstyle{plain}
\newtheorem{definition}[theorem]{Definition} 
 
\newtheorem{conjecture}[theorem]{Conjecture}
\newtheorem{corollary}[theorem]{Corollary}
\newtheorem{lemma}[theorem]{Lemma}
\newtheorem{proposition}[theorem]{Proposition}
\newtheorem{observation}[theorem]{Observation}
\newtheorem{claim}[theorem]{Claim}

\newtheorem{question}[theorem]{Question}

\usepackage{babel}
\begin{document}

\title{Random matrices: Probability of Normality}

\author{Andrei Deneanu \thanks{Department of Mathematics, Yale University. Email: andreiflorin.deneanu@yale.edu.} \and Van Vu \thanks{Department of Mathematics, Yale University. Email: van.vu@yale.edu ; V.Vu's research is supported by NSF grant DMS-1500944 and AFORS grant FA9550-12-1-0083.}}
\date{}
\maketitle
\begin{abstract} 
	
	We consider a random $n\times n$ matrix, $M_n$,  whose entries are independent and identically distributed (i.i.d.) Rademacher random variables (taking values  $\{ \pm1 \}$ with probability $1/2$)
	and prove 
	
	$$ 2^{-\left(0.5+o(1)\right)n^2}  \le \P (M_n \text{ is normal}) \le2^{-(0.302+o(1))n^{2}}. $$
	
\noindent 	We conjecture that the lower bound is sharp. 

\end{abstract}

\section{Introduction}
A basic notion in linear algebra is that of a {\it normal matrix}. We call an $n \times n$ real-valued matrix $A$  {\it normal} if it satisfies $AA^T=A^TA$. In this paper, we study the following question: 

\begin{question}
How often is a random matrix normal?
\end{question}

Despite the central role of normal matrices in matrix theory, to our surprise, we found no previous results concerning this natural and important question. When the entries have a  continuous distribution, the problem is, of course, easy. The probability in question is zero, as the set of normal matrices, viewed as points in $\R^{n^2}$, is not full dimensional.  However, for discrete distributions, the situation is totally different. 

We are going to focus on random matrices with all entries being i.i.d. Rademacher random variables, that is the entries take values $\pm1$ each with probability $1/2$. This is the most important  class among random matrices with discrete distribution. 
We denote the $n\times n$ Rademacher matrix by $M_n$ and by $\nu_n$ the probability that $M_n$ is normal. Throughout this paper, we assume that $n$ tends to infinity and all asymptotic notations are used under this assumption.

Clearly, the probability that $M_n$ is symmetric is $2^{-\left(0.5+o(1)\right)n^{2}}$. Since symmetric matrices are normal, 
	$$ \nu_n  \ge 2^{-\left(0.5+o(1)\right)n^{2}}.  $$
		
		We conjecture that this lower bound is sharp.
		
		\begin{conjecture} 
		
		 Let $\nu_n$ be defined as above. Then,
		 
		$$ \nu_n =  2^{-\left(0.5+o(1)\right)n^{2} }. $$
				\end{conjecture}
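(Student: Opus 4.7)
The lower bound $\nu_n \ge 2^{-(0.5+o(1))n^2}$ follows from the symmetric-matrix count already noted, so the content of the conjecture is the matching upper bound $\nu_n \le 2^{-(0.5-o(1))n^2}$. The natural starting point is the symmetric-antisymmetric decomposition $M_n = S + A$ with $S := \tfrac12(M_n + M_n^T)$ and $A := \tfrac12(M_n - M_n^T)$: since $M_n M_n^T - M_n^T M_n = 2(SA - AS)$, normality is equivalent to the commutator equation $[S,A] = 0$.

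The plan is to condition on the \emph{antisymmetric pattern} $T := \{(i,j) : i<j,\ (M_n)_{ij} \ne (M_n)_{ji}\}$. Each $T \subseteq \binom{[n]}{2}$ occurs with probability $2^{-\binom{n}{2}}$, and conditional on $T$ the two parts $S$ (supported on the diagonal and on $T^c$, Rademacher there) and $A$ (supported on $T$, Rademacher there) are independent and together contribute exactly $\binom{n+1}{2}$ Rademacher coordinates. Writing $N(T)$ for the number of such $\pm 1$-valued pairs with $[S,A] = 0$, one gets
$$ \nu_n \;=\; 2^{-n^2} \sum_{T \subseteq \binom{[n]}{2}} N(T). $$
The empty pattern alone contributes $N(\emptyset) = 2^{\binom{n+1}{2}}$, yielding $2^{-\binom{n}{2}} = 2^{-(0.5+o(1))n^2}$. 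The task reduces to proving $\sum_{T \ne \emptyset} N(T) = 2^{\binom{n}{2} + o(n^2)}$.

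Heuristically this should hold: $[S,A] = 0$ imposes $\binom{n+1}{2}$ symmetric scalar equations on $\binom{n+1}{2}$ Rademacher coordinates, so generically $N(T) = 2^{o(n^2)}$ per nonempty $T$ while $\#\{T\} = 2^{\binom{n}{2}}$ gives the desired total. I would execute this via two complementary anti-concentration arguments. First, fix $A$ and view $[S,A] = 0$ as a linear system in the $\pm 1$ entries of $S$, bound $N(T)$ using a Hal\'asz-type inequality whose strength depends on the rank of $\mathrm{ad}_A$ restricted to symmetric matrices -- generically $\binom{n+1}{2}$ minus the dimension (of order $n$) of symmetric matrices commuting with $A$. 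Second, swap the roles and fix $S$, linearizing in $A$, and apply a dual Littlewood-Offord bound. For each $T$ take the better of the two estimates and sum.

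The main obstacle lies in ``structured'' cases where one of $S$, $A$ admits an atypically large commutant -- e.g.\ when $A$ has low rank, many repeated eigenvalues, or an obvious block decomposition forced by a non-generic $T$. In such cases the linear system has small rank, the anti-concentration bounds degrade, and $N(T)$ can be far larger than expected; a small side calculation even shows that $N(T) = 0$ for $|T| = 1$ with $n \ge 3$, so the exceptional $T$ are concentrated in a narrow range of sizes where combinatorial and algebraic structure must be combined. Quantifying the cumulative contribution of these exceptional patterns is precisely what separates the authors' $2^{-(0.302+o(1))n^2}$ from the conjectured $2^{-(0.5+o(1))n^2}$. Bridging it will almost certainly require an \emph{inverse Littlewood-Offord theorem for commutators} -- a structural classification of antisymmetric $\{-1,0,1\}$ matrices whose commutant inside the $\pm 1$-patterned symmetric matrices is exponentially large. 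Such a theorem is not currently available, which is why the statement remains a conjecture.
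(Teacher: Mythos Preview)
The statement is a \emph{conjecture}; the paper does not prove it, and neither do you. Your write-up is not a proof but a program: you set up the symmetric/antisymmetric decomposition $M_n=S+A$, reduce normality to $[S,A]=0$, condition on the support pattern $T$ of $A$, and arrive at the correct identity $\nu_n=2^{-n^2}\sum_T N(T)$. All of this is sound (including the side remark that $N(T)=0$ when $|T|=1$ and $n\ge 3$). But you then say explicitly that the remaining step---showing $\sum_{T\ne\emptyset}N(T)\le 2^{\binom{n}{2}+o(n^2)}$---would require an ``inverse Littlewood--Offord theorem for commutators'' that ``is not currently available.'' That missing step is the entire content of the conjecture, so what you have written is an honest description of the gap, not a closing of it.

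For comparison, the paper's route to its weaker upper bound $2^{-(0.302+o(1))n^2}$ is structurally quite different from your proposal: it does not use the commutator reformulation at all. Instead it permutes the rows and columns of $M_n$ so that the ranks of a family of $2(n-i-1)\times i$ submatrices $T_i$ follow a prescribed profile $R_{k,t}(i)$ (the Permutation Lemma), then runs a recursion that exposes rows and columns one at a time and applies Odlyzko's hypercube--subspace bound at each step, together with a separate count of matrices with a certain ``property~$\CP$.'' Your $[S,A]=0$ framework is a natural alternative starting point, and the obstacle you isolate---patterns $T$ for which one of $S,A$ has an atypically large commutant---is morally the same obstruction that keeps the paper's recursion from pushing the exponent past $0.302$ to $0.5$. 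Neither approach currently overcomes it.
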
			
				
		Our main result is that  $\nu_n \le 2^{-\left(0.302+o(1)\right)n^2}$. We actually prove a more general statement.
							
				\begin{theorem}\label{thm1}
				For any fixed $n \times n$ real valued matrix $C$  			
				$$\P(M_n M_n ^{T}=M_n ^{T}M_n +C)\leq2^{-\left(0.302+o(1)\right)n^{2}}.$$
				\end{theorem}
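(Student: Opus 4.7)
The plan is to recast the event via the symmetric-antisymmetric decomposition of $M$. Writing $M=S+A$ with $S=(M+M^T)/2$ and $A=(M-M^T)/2$, one checks directly that $MM^T-M^TM=2(AS-SA)=2[A,S]$, so the theorem's event becomes $[A,S]=C/2$. Since $[A,S]$ is automatically symmetric and has zero diagonal (for a $\pm 1$-valued $M$ the off-diagonal supports of $A$ and $S$ are disjoint, so $\sum_k A_{ik}S_{ik}=0$), we may restrict to $C$ of this form; otherwise the probability vanishes.

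Next, I would parameterize the randomness orthogonally. For each $i<j$, the pair $(M_{ij},M_{ji})$ decomposes into an independent \emph{type} bit $T_{ij}\in\{\mathrm{sym},\mathrm{anti}\}$ (whether $M_{ij}=M_{ji}$) and a \emph{sign} bit $\eps_{ij}\in\{\pm 1\}$. Set $U=\{(i,j):T_{ij}=\mathrm{anti}\}$, so that $A$ is supported on $U$ and $S$ off-diagonally on $U^c$. Now expose the symmetric part $S$, equivalently $T$, the diagonal, and $(\eps_{ij})_{(i,j)\in U^c}$. The remaining randomness $\eps^A:=(\eps_{ij})_{(i,j)\in U}\in\{\pm 1\}^{|U|}$ enters $[A,S]=C/2$ as a real-linear system. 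For $S$ with simple spectrum (an event of probability $1-o(1)$ for random symmetric $\pm 1,0$ matrices), only the zero antisymmetric matrix commutes with $S$, so the map $\eps^A\mapsto[A(\eps^A),S]$ is injective. Hence at most one real $\eps^A$ satisfies the system, and the conditional probability is at most $2^{-|U|}$. Averaging gives
$$\nu_n\le\E[2^{-|U|}]=(3/4)^{\binom{n}{2}}=2^{-(\log_2(4/3)/2+o(1))n^2}\approx 2^{-0.207\,n^2},$$
a first-pass bound that falls short of the claim.

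The main obstacle is sharpening this exponent from $0.207$ to $0.302$. The slack is that every $S$ whose unique real preimage $L_S^{-1}(C/2)$ exists has been counted, regardless of whether that preimage actually has entries in $\{\pm 1\}$. The refinement should apply a Littlewood-Offord-type anti-concentration to the explicit coefficient matrix of $L_S$ (whose rows are indexed by upper-triangular positions $(i,j)$ and whose columns by positions in $U$), showing that for most $S$ the candidate preimage has many coordinates outside $\{\pm 1\}$, thereby killing those realizations. Balancing this gain against the entropy cost of specifying the type $T$ --- most naturally by splitting into regimes depending on whether $|U|$ is small (where the entropy of $T$ is itself low, giving savings by counting) or large (where anti-concentration is strong) and optimizing the resulting trade-off --- is where I expect the constant $0.302$ to emerge. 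Verifying that the non-generic spectral case (repeated eigenvalues of $S$) contributes negligibly is a secondary task, amenable to standard estimates for Wigner-type ensembles.
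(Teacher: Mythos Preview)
Your decomposition $M=S+A$ and the identity $MM^T-M^TM=2[A,S]$ are correct, and the calculation leading to the bound $(3/4)^{\binom{n}{2}}=2^{-(0.2075\ldots+o(1))n^2}$ is valid (modulo the spectral issue below). But this is where your proof stops being a proof: everything after ``The main obstacle is sharpening this exponent'' is a wish list, not an argument. You do not specify which linear forms you would apply anti-concentration to, what bound you would obtain, or how the optimization would produce $0.302$ rather than some other number. Littlewood--Offord estimates typically buy polynomial or at best single-exponential factors, not an extra $2^{-0.095\,n^2}$; there is no indication of a mechanism that would close a gap of that order. As written, you have proved the exponent $0.207$, not $0.302$.

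There is also a real hole in the part you treat as routine. You condition on $S$ having simple spectrum and dismiss the complement as ``an event of probability $1-o(1)$'', to be handled by ``standard estimates for Wigner-type ensembles''. But the target probability here is $2^{-\Theta(n^2)}$, so an event of probability merely $o(1)$---or even $2^{-cn}$, which is the best one currently knows for repeated eigenvalues---cannot be discarded. The paper makes exactly this point: there is no nontrivial event of probability $1-2^{-\omega(n^2)}$ to condition on, and this is what forces a different strategy. The paper's route is completely unlike yours: it introduces a deterministic \emph{permutation lemma} guaranteeing that every matrix is equivalent (under simultaneous row/column permutation) to one whose submatrix ranks $\rank(T_i)$ follow a prescribed profile $R_{k,t}(i)$; it then runs a recursion exposing rows and columns one at a time, using Odlyzko's lemma at each step; a second argument bounds the probability that $T_t$ has a certain rank-drop property; and the constant $0.302$ comes out of an explicit six-case optimization over the parameters $k,t$. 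None of this structure appears in your proposal.
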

				
Setting $C=0$, one obtains $\nu_n\leq2^{-\left(0.302+o(1)\right)n^2}$. This more general setting plays a role in our proof. In the rest of the paper we can assume, without loss of generality, that $C$ has integer entries. 				
There have been studies of Rademacher matrices with a similar flavor, such as estimating  the probability that the matrix is singular   \cite{Komlos, KKS, TV2, BVW} or has double eigenvalues \cite{TV3, NVT}. In these cases, the conjectural bounds are of the form
$2^{-(c+o(1)) n}$, for some constant  $c >0$. While this probability is small, it is still much larger than $2^{- \Omega (n^2)} $, which enables one to exclude very rare events (those occurring with probability $2^{- \omega (n) }$) and then condition on their complement. It is, in fact, the strategy used to obtained the best current bounds 
for these problems. 

 The difficulty with the problem at hand is that we are aiming at a  bound which is extremely small (notice that any non-trivial event concerning $M_n$ holds  with probability at least $2^{-n^2}$, which is the mass of a single $\pm 1$ matrix).  There is simply no non-trivial event of probability $1 -2^{-\omega (n^2)}$ to condition on.  Thus, one needs a new strategy. The key of our approach  is  a new observation that for any given matrix, we can permute its rows and columns so that the ranks of certain submatrices follow a given pattern (see Lemma \ref{lemma2}). The fact that there are only  $n! = 2^{o(n^2)} $ permutations works in our favor and enables us  to execute a different type of conditioning. To our best knowledge, an argument of this type has not been used in random matrix theory.

\section{Preliminaries }
 
 In this section we will introduce some notation, definitions and lemmas that will be used in our proof. 

\subsection{Notation} 

\begin{definition}\label{deff}
Let $M$ be a fixed $n \times n$ matrix and let $1\leq i, j \leq n$ be fixed integers. We define $M(i;\leq j)$ to be the $i^{th}$ row of $M$ where we only keep the first $j$ entries. Similarly, we define $M(i ; \geq j)$, $M(\leq i ; j)$, $M(\geq i ; j), M(>i ; <j), M(<i; >j)$.

\begin{figure}[H]\label{newfig1}

\begin{tikzpicture}[line cap=round,line join=round,>=triangle 45,x=0.32cm,y=0.32cm]
\clip(-4,-3.6) rectangle (43,2.5);

\fill[line width=0.8pt,fill=black,fill opacity=0.05000000074505806] (4.,2) -- (4.,0) -- (43,0) -- (43,2) -- cycle;

\draw [line width=0.8pt] (4,2.)-- (43,2);
\draw [line width=0.8pt] (4,0)-- (43,0);
\draw [line width=0.8pt] (43,0)-- (43,2);
\draw [line width=0.8pt] (4,0)-- (4,2);
\draw [line width=0.8pt] (18.2,0)-- (18.2,2);

\draw [decorate,decoration={brace,amplitude=5pt,mirror,raise=2ex}]
  (4.5,0) -- (17.9,0) node[midway,yshift=-2.5em]{$M(i; \leq j)$};
  
 \draw [decorate,decoration={brace,amplitude=5pt,mirror,raise=2ex}]
  (18.5,0) -- (42.5,0) node[midway,yshift=-2.5em]{$M(i;>j)$};

\draw (-2.5,1.8) node[anchor=north west] {$M(i,>0) = $};

\draw (4.2,1.85) node[anchor=north west] {$M(i,1), $};
\draw (8.4,1.85) node[anchor=north west] {$M(i,2), $};
\draw (12.6,1) node[anchor=north west] {...};
\draw (14,1.85) node[anchor=north west] {$M(i,j)$};

\draw (18.5,1.85) node[anchor=north west] {$M(i,j+1), $};
\draw (24.5,1.85) node[anchor=north west] {$M(i,j+2), $};
\draw (30.8,1) node[anchor=north west] {...};
\draw (32.4,1.85) node[anchor=north west] {$M(i,n-1),$};
\draw (38.5,1.85) node[anchor=north west] {$M(i,n)$};

\end{tikzpicture}

\caption{A graphical representation of the notation.}
\label {fig3.2}
\end{figure}
\end{definition}

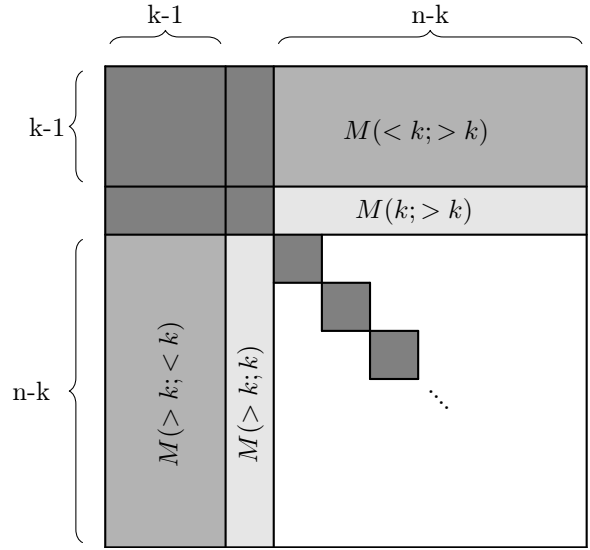
\begin{figure}[ht]\label{newfig2}

\begin{tikzpicture}[line cap=round,line join=round,>=triangle 45,x=0.32cm,y=0.32cm]
\clip(-2,0) rectangle (40,22.6);
\fill[line width=0.8pt,fill=black,fill opacity=0.00] (14.,20) -- (14.,0) -- (34,0) -- (34,20) -- cycle;
\fill[line width=0.8pt,fill=black,fill opacity=0.1] (19.,13) -- (19.,0) -- (21,0) -- (21,13) -- cycle;
\fill[line width=0.8pt,fill=black,fill opacity=0.1] (21.,15) -- (21.,13) -- (34,13) -- (34,15) -- cycle;
\fill[line width=0.8pt,fill=black,fill opacity=0.5] (14,20) -- (14,13) -- (21,13) -- (21,20) -- cycle;
\fill[line width=0.8pt,fill=black,fill opacity=0.3] (21,20) -- (21,15) -- (34,15) -- (34,20) -- cycle;
\fill[line width=0.8pt,fill=black,fill opacity=0.3] (14,13) -- (14,0) -- (19,0) -- (19,13) -- cycle;
\draw [line width=0.8pt] (14,0)-- (14,20);
\draw [line width=0.8pt] (34,0)-- (34,20);
\draw [line width=0.8pt] (14,0)-- (34,0);
\draw [line width=0.8pt] (14,20)-- (34,20);
\draw [line width=0.8pt] (21,0)-- (21,20);
\draw [line width=0.8pt] (19,0)-- (19,20);
\draw [line width=0.8pt] (14,15)-- (34,15);
\draw [line width=0.8pt] (14,13)-- (34,13);
\draw [line width=0.8pt] (21,11)-- (25,11);
\draw [line width=0.8pt] (23,9)-- (23,13);
\fill[line width=0.8pt,fill=black,fill opacity=0.5] (21,13) -- (23,13) -- (23,11) -- (21,11) -- cycle;
\draw [line width=0.8pt] (25,11)-- (25,7);
\draw [line width=0.8pt] (23,9)-- (27,9);
\fill[line width=0.8pt,fill=black,fill opacity=0.5] (23,11) -- (25,11) -- (25,9) -- (23,9) -- cycle;
\draw [line width=0.8pt] (27,9)-- (27,7);
\draw [line width=0.8pt] (25,7)-- (27,7);
\fill[line width=0.8pt,fill=black,fill opacity=0.5] (25,9) -- (27,9) -- (27,7) -- (25,7) -- cycle;
\draw [decorate,decoration={brace,amplitude=5pt, raise=2ex}]
  (14.2,20) -- (18.8,20) node[midway,yshift=2em]{k-1};
\draw [decorate,decoration={brace,amplitude=5pt, raise=2ex}]
  (21.2,20) -- (33.8,20) node[midway,yshift=2em]{n-k};
\draw [decorate,decoration={brace,amplitude=5pt, raise=2ex}]
  (14,15.2) -- (14,19.8) node[midway,xshift=-2.2em]{k-1};
\draw [decorate,decoration={brace,amplitude=5pt, raise=2ex}]
  (14,0.2) -- (14,12.8) node[midway, xshift=-2.8em]{n-k};
 \draw (27,7) node[anchor=north west] {\rotatebox{-45}{....}};
\draw (24,14.9) node[anchor=north west] {$M(k; >k)$};
\draw (23.5,18.2) node[anchor=north west] {$M(<k; >k)$};
\draw (19.1,8.8) node[anchor=north west] {\rotatebox{90}{$M(>k; k)$}};
\draw (15.8 ,9.8) node[anchor=north west] {\rotatebox{90} {$M(>k; <k)$}};
\end{tikzpicture}

\caption{A graphical representation of $M_n$.}
\end{figure}

Let us reveal our motivation behind these definitions. Notice that if we condition on the entries in the main diagonal and the first $k-1$ rows and columns of $M_n$,  then in order for $M_n$ to be normal, its entries must satisfy the following linear equation:

%\begin{equation}\label{eq3}
% 	C^T_{k-1}(k:) \cdot c_k(k:)-R_{k-1}(k:) \cdot r_k^T(k:)=c,
%\end{equation}

\begin{equation}\label{eq3}
 	M_n(>k; < k)^T \cdot M_n(>k;k)-M_n(<k;>k) \cdot M_n(k;>k)^T=c,
\end{equation}

\noindent  where $c$ is a vector in $\mathbb Z^{k-1}$, determined by the entries that were conditioned upon. We can rewrite \eqref{eq3} in a nicer way, as 

%\begin{equation} \label{eq44}
%	\left[\begin{array}{c} R_{k-1}^T(k:)\\C_{k-1}(k:) \end{array}\right]^T \left[\begin{array}{c} -r_k^T(k:)\\ \;\;\;c_k(k:) \end{array}\right]=c.
%\end{equation}

\begin{equation} \label{eq44}
	\left[\begin{array}{c} M_n(<k;>k)^T \\M_n(>k; < k)\;\; \end{array}\right]^T \left[\begin{array}{c} -M_n(k;>k)^T\\ \;\;\;M_n(>k;k) \;\;\,\end{array}\right]=c.
\end{equation}

As we will mainly be working with equations of the form  \eqref{eq44}, we define 
	$$ T_{k-1} := \left[\begin{array}{c} M_n(<k;>k)^T \\M_n(>k; < k) \;\; \end{array}\right] \text{ and }$$
	$$ x_k:= \left[\begin{array}{c} -M_n(k;>k)^T\\ \;\;\;M_n(>k;k) \;\;\; \end{array}\right]. $$

Relation \eqref{eq44} can then be rewritten as
	
\begin{equation}\label{eq4}
	T_{k-1}^Tx_k=c.
	\end{equation}
	
Given a deterministic matrix $M$, the matrices $T_i$ are well defined. Note that the number of solutions to equation \eqref{eq4} depends on the rank of $T_{k-1}$. We define $\rank_i(M)$ by

$$\rank_i (M) := \rank (T_i) . $$ 

In order to deal with the number of solutions to equations similar to equation \eqref{eq4} we need the following simple observation of Odlyzko (see \cite{Odl}). The proof follows from the simple fact that for any vector in a $k$ dimensional space, there is a set of $k$ coordinates that determines all other coordinates. 
\begin{lemma} \label{claim1}
Let $Q_{n}$ be the set of vertices of the hypercube
$\{\pm1\}^{n}$. Then for any $k$-dimensional subspace $\CS$ of $\mathbb{R}^{n}$, we have:
	$$|Q_{n}\cap{\CS}|\leq2^{k}.$$
\end{lemma}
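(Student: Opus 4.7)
The plan is to exploit the hint given in the paper: locate $k$ coordinates that determine all others on the subspace $\CS$, and then bound the number of $\pm 1$ vectors by the number of $\pm 1$ choices on those $k$ coordinates.

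First I would pick any basis $v^{(1)},\dots,v^{(k)}$ of $\CS$ and form the $k\times n$ matrix $A$ whose $j$-th row is $v^{(j)}$. Since $\CS$ is $k$-dimensional, $A$ has rank $k$, so by the row rank $=$ column rank equality, there exist indices $1\le i_1<\dots<i_k\le n$ such that the columns of $A$ indexed by $i_1,\dots,i_k$ are linearly independent. Let $\pi\colon \R^n\to \R^k$ be the coordinate projection $\pi(w)=(w_{i_1},\dots,w_{i_k})$. The restriction $\pi|_{\CS}$ is a linear map, and its matrix in the chosen basis is exactly the $k\times k$ submatrix of $A$ consisting of columns $i_1,\dots,i_k$, which is invertible. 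Hence $\pi|_{\CS}$ is injective.

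Next I would conclude: if $w,w'\in Q_n\cap \CS$ agree on the coordinates $i_1,\dots,i_k$, then $\pi(w)=\pi(w')$ and injectivity forces $w=w'$. Therefore the map $Q_n\cap \CS \to \{\pm 1\}^k$ sending $w$ to $(w_{i_1},\dots,w_{i_k})$ is injective, and
$$|Q_n\cap \CS|\ \le\ |\{\pm 1\}^k|\ =\ 2^k.$$

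There is no real obstacle here; the only thing to be careful about is the existence of $k$ linearly independent columns of $A$, which is immediate from $\rank(A)=k$. The argument is short and purely linear-algebraic, and the bound $2^k$ is sharp (achieved, for example, when $\CS$ is the coordinate subspace spanned by $k$ of the standard basis vectors).
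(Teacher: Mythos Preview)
Your proof is correct and is precisely the argument the paper sketches in one line (``for any vector in a $k$-dimensional space, there is a set of $k$ coordinates that determines all other coordinates''); the paper does not give any further detail, so there is nothing to compare.

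One small slip in your closing remark: the sharpness example is wrong. If $\CS$ is the coordinate subspace spanned by $k$ standard basis vectors and $k<n$, then every vector in $\CS$ has its last $n-k$ coordinates equal to $0$, whereas every vector in $Q_n$ has all coordinates $\pm 1$; hence $Q_n\cap\CS=\emptyset$. A correct example is the subspace $\CS=\{x\in\R^n : x_{k+1}=x_{k+2}=\cdots=x_n=x_1\}$, which has dimension $k$ and intersects $Q_n$ in exactly $2^k$ points.
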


\begin{corollary} \label{lemma1}
Let $M\in\mathbb{M}_{k\times m}(\pm1)$ be a fixed matrix of rank $r>0$, let $c\in\mathbb{M}_{k\times 1}(\mathbb{Z})$ be a fixed vector and let $x_m\in \{\pm1\}^m$ be a random vector uniformly distributed over the sample space. Then the following holds
	$$\P(Mx_m=c)\leq2^{-r}.$$
\end{corollary}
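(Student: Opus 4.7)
My plan is to observe that the set of real solutions to $Mx=c$ is either empty or an affine subspace of $\R^m$ of dimension $m-r$, and then count how many $\pm 1$ vectors can lie in such an affine subspace. If the system is infeasible, then $\P(Mx_m=c)=0$ and we are done, so assume there exists a fixed $x_0\in\R^m$ with $Mx_0=c$. Then
\[
\{x\in\R^m : Mx=c\} = x_0 + \ker(M),
\]
and $\ker(M)$ is a linear subspace of dimension $m-r$ since $M$ has rank $r$.

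Next I would adapt the argument behind Lemma \ref{claim1} to this affine setting. Since $M$ has rank $r$, there exist $r$ columns of $M$ that form a basis for its column space; call the corresponding coordinates the \emph{pivot coordinates} and the remaining $m-r$ coordinates the \emph{free coordinates}. For any vector $x$ with $Mx=c$, the free coordinates determine the pivot coordinates uniquely (by the same linear-algebra fact Odlyzko uses: once we fix all but $r$ coordinates in a solution to an $r$-rank linear system, the remaining $r$ coordinates are forced). Therefore the number of $\pm 1$ vectors in the affine subspace $x_0+\ker(M)$ is bounded by the number of possible $\pm 1$ choices for the free coordinates, which is at most $2^{m-r}$.

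Finally, since $x_m$ is uniformly distributed on the $2^m$ points of $\{\pm 1\}^m$, we conclude
\[
\P(Mx_m=c) \;=\; \frac{|\{x\in\{\pm 1\}^m : Mx=c\}|}{2^m} \;\leq\; \frac{2^{m-r}}{2^m} \;=\; 2^{-r},
\]
which is the desired bound. The only subtle point is the extension of Odlyzko's lemma from linear to affine subspaces; but this is essentially immediate from the pivot/free coordinate decomposition, so I do not anticipate any real obstacle here.
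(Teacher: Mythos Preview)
Your proposal is correct and is essentially the same argument the paper has in mind: the paper states Corollary~\ref{lemma1} immediately after Lemma~\ref{claim1} without a separate proof, relying on the same pivot/free-coordinate fact (``there is a set of $k$ coordinates that determines all other coordinates'') applied to the affine solution set $x_0+\ker(M)$. Your write-up simply makes this implicit step explicit, and your observation that the affine case follows from the same decomposition is exactly right.
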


An essential element in our proof is that there exists a way to permute the rows and columns of any matrix that changes the rank of specific submatrices, in particular the ranks of $T_i$'s, while preserving the normality status of the matrix. 

\begin{definition}
Let $S_{n}$ be the set of all permutations of $\{1,2,...,n\}$. For any $\sigma\in S_{n}$ and any $n\times n$ matrix $M$, set 
	$$M_{\sigma}:= S_{\sigma}MS_{\sigma}^T,$$
where $S_{\sigma}$ is the permutation matrix associated with $\sigma$. In other words, $M_{\sigma}$ is created by permuting the rows and columns of $M$ according to $\sigma$.
\end{definition}

\subsection{Permutation Lemma} 

We  form the following equivalence classes. For two square matrices $M$ and $N$ of size $n$

$$ M\longleftrightarrow N \iff \exists\;\sigma\in S_{n}\mbox{ such that }M_{\sigma}=N.$$

\begin{definition}
Let $C$ be a fixed $n \times n$ matrix. We say that $M$ is \textit{C-normal} if and only if there exists $\sigma \in S_n$ such that $MM^T-M^TM=C_{\sigma}$.
\end{definition}

\begin{proposition}\label{prop1}
Let $\sigma\in S_{n}$, then \textbf{$M$} is $C$-normal if and only if $M_{\sigma}$ is $C$-normal. 
\end{proposition}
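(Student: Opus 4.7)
The plan is to reduce the claim to two algebraic facts: first, that the operation $M \mapsto MM^T - M^T M$ commutes with the $\sigma$-conjugation $M \mapsto M_\sigma$; and second, that the family $\{C_\tau : \tau \in S_n\}$ is closed under further $\sigma$-conjugation, because the permutation matrices form a group. Once both are established, $C$-normality is manifestly preserved under $\sigma$-conjugation.

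First I would verify the commutation identity
\begin{equation*}
M_\sigma M_\sigma^T - M_\sigma^T M_\sigma \;=\; (MM^T - M^T M)_\sigma.
\end{equation*}
This is a one-line computation: since $S_\sigma$ is a permutation matrix, it is orthogonal, so $S_\sigma^T S_\sigma = I$, and therefore
\begin{equation*}
M_\sigma M_\sigma^T = S_\sigma M S_\sigma^T S_\sigma M^T S_\sigma^T = S_\sigma (MM^T) S_\sigma^T,
\end{equation*}
and analogously for $M_\sigma^T M_\sigma$. Subtracting gives the identity above.

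Next I would use the group structure of permutation matrices. The map $\tau \mapsto S_\tau$ is a homomorphism, so $S_\sigma S_\tau = S_{\sigma\tau}$ (up to convention), hence $(C_\tau)_\sigma = S_\sigma S_\tau C S_\tau^T S_\sigma^T = S_{\sigma\tau} C S_{\sigma\tau}^T = C_{\sigma\tau}$. Thus if $MM^T - M^T M = C_\tau$, then by the commutation identity $M_\sigma M_\sigma^T - M_\sigma^T M_\sigma = (C_\tau)_\sigma = C_{\sigma\tau}$, exhibiting $M_\sigma$ as $C$-normal with witnessing permutation $\sigma\tau$. This proves one direction.

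For the converse, I would simply apply the forward direction with $M$ replaced by $M_\sigma$ and $\sigma$ replaced by $\sigma^{-1}$, noting that $(M_\sigma)_{\sigma^{-1}} = M$ by the homomorphism property. I do not anticipate any real obstacle here; the entire argument is a bookkeeping exercise in conjugation and group composition, and the only subtlety is to be careful that the permutation $\tau$ appearing in the definition of $C$-normality is allowed to change when we pass from $M$ to $M_\sigma$, which is exactly why the definition quantifies existentially over $\tau \in S_n$.
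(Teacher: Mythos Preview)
Your proposal is correct and follows essentially the same approach as the paper's proof: both rely on the orthogonality of $S_\sigma$ to show that conjugation commutes with $M \mapsto MM^T - M^TM$, and on the group structure of permutation matrices to see that $(C_\rho)_\sigma$ is again of the form $C_{\sigma\rho}$. The only cosmetic difference is that the paper packages the argument as a single chain of $\iff$'s (handling both directions at once), whereas you prove one direction and then invoke $\sigma^{-1}$ for the converse.
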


\begin{proof}  
For any permutation $\sigma' \in S_n$ let $S_{\sigma'}$ be the permutation matrix associated with it. Then
	\begin{align*}
	\text{$M$ is $C$-normal} & \iff \exists \, \rho \in S_n \text{ such that }MM^T-M^TM=C_{\rho}\\
	&\iff S_{\sigma}MM^TS_{\sigma}^T - S_{\sigma}M^TMS_{\sigma}^T=S_{\sigma}C_{\rho}S_{\sigma}^T\\
	&\iff S_{\sigma}MS_{\sigma}^TS_{\sigma}M^TS^T_{\sigma} - S_{\sigma}M^TS_{\sigma}^TS_{\sigma}MS_{\sigma}^T=S_{\sigma}S_{\rho}CS^T_{\rho}S_{\sigma}^T \\
	&\iff M_{\sigma}(M_{\sigma})^T - (M_{\sigma})^TM_{\sigma}=C_{\sigma\rho}\\
	&\iff \text{ $M_{\sigma}$ is $C$-normal.}
	\end{align*}
\end{proof}

\begin{observation}
Proposition \ref{prop1} implies that if $M\longleftrightarrow N $ and $M$ is \textit{$C$-normal}, then $N$ is also \textit{$C$-normal}. There are $n!=2^{\theta\left(n\log(n)\right)}=2^{o(n^{2})}$
permutations in $S_{n}$, hence it is enough to bound the equivalence classes containing \textit{$C$-normal} matrices. It follows that Theorem \ref{thm1} can be rephrased as Theorem \ref{thm10} below.
\end{observation}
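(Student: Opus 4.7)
The plan is to verify the three assertions of the Observation, each of which follows formally from Proposition \ref{prop1}, and then state the reformulation explicitly.

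First, the implication ``$M\longleftrightarrow N$ and $M$ is $C$-normal $\Rightarrow$ $N$ is $C$-normal'' is immediate from Proposition \ref{prop1}: by definition of the equivalence relation, $M\longleftrightarrow N$ means $N=M_\sigma$ for some $\sigma\in S_n$, and Proposition \ref{prop1} says precisely that $M$ is $C$-normal iff $M_\sigma$ is. Thus $C$-normality is a property of the equivalence class, not of the representative.

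For the counting step, Stirling's formula gives $n!=2^{\Theta(n\log n)}=2^{o(n^2)}$, and since each equivalence class is the orbit $\{M_\sigma:\sigma\in S_n\}$ of any representative, it contains at most $n!$ matrices. Writing $N_C$ for the number of equivalence classes of $\pm 1$ matrices that contain a $C$-normal element, this yields
\[
|\{M\in\{\pm 1\}^{n\times n}:M\text{ is $C$-normal}\}|\;\leq\;n!\cdot N_C\;=\;2^{o(n^2)}\cdot N_C.
\]
To link this with Theorem \ref{thm1}, observe that if $M_nM_n^T=M_n^TM_n+C$ then $M_n$ is automatically $C$-normal (take $\sigma=\mathrm{id}$ in the definition), so
\[
\P(M_nM_n^T=M_n^TM_n+C)\;\leq\;\P(M_n\text{ is $C$-normal})\;\leq\;2^{o(n^2)}\cdot\frac{N_C}{2^{n^2}}.
\]

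Consequently, to prove Theorem \ref{thm1} it suffices to establish a bound of the form $N_C\leq 2^{(0.698+o(1))n^2}$, which is what Theorem \ref{thm10} will assert. There is no genuine obstacle inside the Observation itself; it is a purely formal reduction whose value lies in trading the enormous $2^{-n^2}$ denominator for a counting problem on equivalence classes, at the negligible cost of the $2^{o(n^2)}$ orbit-size factor. The real difficulty, which this reduction makes approachable, is deferred to Theorem \ref{thm10}, where the new rank-pattern conditioning strategy advertised in the introduction has to be executed to control $N_C$.
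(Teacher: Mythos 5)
Your argument is correct and is essentially the paper's own (implicit) reasoning: $C$-normality is a class invariant by Proposition \ref{prop1}, each class has at most $n!=2^{o(n^2)}$ elements, and the event of Theorem \ref{thm1} is contained in the $C$-normality event, so everything reduces to Theorem \ref{thm10} up to a negligible factor. The only slight imprecision is that Theorem \ref{thm10} is stated as a probability bound on the event $\{\exists\,\sigma: M_{n,\sigma}\text{ is }C\text{-normal}\}$ rather than as the class-count bound $N_C\leq 2^{(0.698+o(1))n^2}$, but the two are equivalent up to $2^{o(n^2)}$ by exactly the orbit-size argument you give.
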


\begin{theorem}\label{thm10}For any fixed matrix $C$
	$$ \P(\text{There exists } \sigma \in S_n \text{ such that }M_{n, \sigma}\text{ is C-normal}) \leq 2^{-(0.302+o(1))n^2}.$$
\end{theorem}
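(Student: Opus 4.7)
The plan combines three ingredients: the permutation invariance of $C$-normality (Proposition \ref{prop1}), the permutation lemma (Lemma \ref{lemma2}, promised in the introduction and enabling us to engineer the ranks of the matrices $T_{k-1}$), and a layer-by-layer conditioning argument in which Corollary \ref{lemma1} is applied to the identity \eqref{eq4}.

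First I would fix a target integer sequence $\{r_k^*\}_{k=1}^n$. Lemma \ref{lemma2} guarantees that for every $\pm 1$ matrix $M$ there is a permutation $\sigma\in S_n$ with $\rank T_{k-1}(M_\sigma)\geq r_k^*$ for all $k$. Since $C$-normality is permutation invariant (Proposition \ref{prop1}) and $M_\sigma$ is equidistributed with $M_n$, a union bound over $\sigma\in S_n$ yields
\[
\P(M_n\text{ is }C\text{-normal})\;\leq\; n!\cdot \P\bigl(M_n\text{ is }C\text{-normal and }\rank T_{k-1}\geq r_k^*\text{ for every }k\bigr),
\]
and the prefactor $n!=2^{o(n^2)}$ is harmless.

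Next I would bound the joint probability on the right by revealing $M_n$ in $n$ stages, where stage $k$ exposes the diagonal entry $M_{kk}$ together with the fresh Rademacher vectors $M_n(k;>k)$ and $M_n(>k;k)$. After stage $k-1$ the matrix $T_{k-1}$ is entirely determined. The normality equations newly activated at stage $k$, namely those indexed by $(j,k)$ for $j<k$, are exactly the linear system \eqref{eq4} with $c$ depending on the past and on $M_{kk}$; conditioning also on $M_{kk}$ (at no cost), the remaining coordinates of $x_k$ are $2(n-k)$ independent Rademachers. On the good event $\{\rank T_{k-1}\geq r_k^*\}$, Corollary \ref{lemma1} gives
\[
\P\bigl(T_{k-1}^T x_k=c\,\big|\,\text{history},\,M_{kk}\bigr)\;\leq\;2^{-\rank T_{k-1}}\;\leq\;2^{-r_k^*}.
\]
Telescoping over $k=1,\ldots,n$ yields
\[
\P\bigl(M_n\text{ is }C\text{-normal and }\rank T_{k-1}\geq r_k^*\text{ for every }k\bigr)\;\leq\;2^{-\sum_k r_k^*},
\]
so altogether $\P(M_n\text{ is }C\text{-normal})\leq 2^{o(n^2)-\sum_k r_k^*}$.

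The remaining, and main, obstacle is to design $\{r_k^*\}$ that is realizable via Lemma \ref{lemma2} and satisfies $\sum_k r_k^*\geq (0.302+o(1))n^2$. The trivial pointwise upper bound $\rank T_{k-1}\leq \min(2(n-k),k-1)$ sums to $(1/3+o(1))n^2$, comfortably above our target $0.302\,n^2$; however, this pointwise maximum cannot be saturated at every $k$ simultaneously by row and column permutations. The real combinatorial content of the theorem therefore lies in Lemma \ref{lemma2}: one must prove that for any $\pm 1$ matrix $M$ one can permute so as to realize nearly the maximal rank across most layers, accepting a controlled loss on a remaining band of layers (likely concentrated near the crossover $k\approx 2n/3$ where neither bound dominates). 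Quantifying this trade-off precisely is the step I expect to require the most care, and it is from this optimization that the specific constant $0.302$ must emerge.
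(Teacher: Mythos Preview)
Your conditioning scheme is essentially the paper's ``first case'' (Lemma~\ref{lemma3}), and carried out correctly it yields only
\[
\P(M_n\text{ is }C\text{-normal})\le 2^{-(0.25+o(1))n^2},
\]
not $0.302$. The reason is that the Permutation Lemma does not provide a single universal sequence $\{r_k^*\}$; it produces, for each matrix, parameters $(k,t)$ such that $\rank_i(M_\sigma)=R_{k,t}(i)$. One must therefore partition into the classes $\CM_{k,t}$ and bound each separately. The layer--by--layer telescoping then gives $\sum_i R_{k,t}(i)$ in the exponent, and the worst pair is $k=t=n/2+o(n)$, where that sum is only $n^2/4+o(n^2)$. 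So the optimization you allude to cannot extract $0.302$; it caps at $0.25$.

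What is genuinely missing from your plan is the paper's second, independent estimate (Lemma~\ref{Lemma3.3}) and the bootstrap that combines it with the first. That argument exploits two further facts you do not mention: (i) for $M\in\CM_{k,t}$ the matrix $T_t$ has the structural ``property $\CP$'' (deleting the $i$th and $(i{+}m)$th rows always drops the rank), and the probability that a random $2(n-t-1)\times t$ Rademacher matrix of rank $k$ has property $\CP$ is at most $2^{(2n-2t-k)(k-n)+o(n^2)}$; and (ii) the top--left $t\times t$ block $M_n(<t;<t)$ must itself be $C'$--normal for a determined $C'$, which recursively contributes a factor $2^{-\alpha t^2+o(n^2)}$ where $\alpha$ is the very exponent being proved. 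This produces a self--referential bound $f(\alpha,n,k,t)=(1-\alpha)t^2-k^2/2-n^2+nk$, and it is the case analysis combining $f$ with the first bounds $g_1,g_2$ (Section~\ref{s3}) that pushes the exponent from $0.25$ to $0.302$. Without this second ingredient your argument is sound but stops at $0.25$.
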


From now on we will say that the matrix $M$ and $N$ are equivalent if they are in the same equivalence class. 
The key idea of our argument  is that given any  matrix $M$, we can find a permutation $\sigma$ such that we can tightly control $\rank_i(M_{\sigma})$. In particular we want $\rank_i(M_{\sigma})$ to be as big as possible, so that we have many restrictions on $x_{i+1}$ in equation \eqref{eq4}.

We claim that for any matrix $M_n$ there exist constants $k,t$ and $\sigma \in S_n$ such that for all $1\leq i \leq n$, $\rank_i(M_{n,\sigma})$ equals $R_{k,t}(i)$, where $R_{k,t}(i)$  is defined below (see also Figure \ref{fig3.1}).

\begin{figure}[H]
    \includegraphics[width=14.5cm]{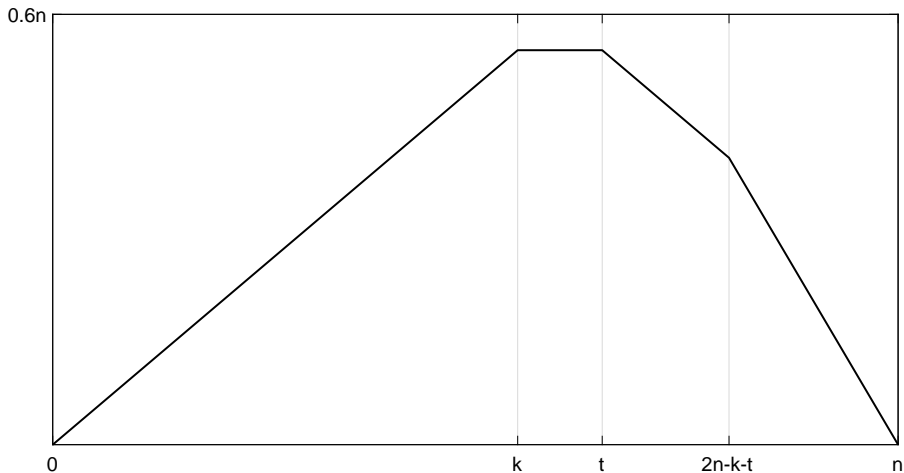}
  \caption{A graphical representation of $R_{k,t}(i)$.}
  \label{fig3.1}
\end{figure}

\begin{equation}\label{RRR}
R_{k,t} (i)= 
\begin{cases}
i & \mbox{if }0<i\leq k\\
k & \mbox{if }k<i\leq t\\
k+t-i & \mbox{if }t<i\leq 2n-k-t\\
2n-2i & \mbox{if }2n-k-t<i\leq n.
\end{cases}
\end{equation}

\noindent We are now ready to state our permutation lemma.

\begin{lemma}[Permutation Lemma]\label{lemma2} 
Let $M$ be a fixed $n \times n$ matrix. Then there exist $k,t\in \mathbb Z_+$ and $\sigma \in S_n$ such that $M_{\sigma}$ satisfies the condition \eqref{RANK} below:
\begin{equation} \label{RANK}
\rank_i (M_{\sigma}) = R_{k,t}(i), \,\,\forall\,1\le i\le n.
\end{equation}
\end{lemma}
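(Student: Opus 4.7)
The plan is to build $\sigma$ greedily, deciding one position at a time which previously unused index of $\{1,\ldots,n\}$ to place at position $i$ of $M_{\sigma}$ (simultaneously permuting row $j$ and column $j$). The starting observation is that, for any permutation and any $i$, the matrix $T_i$ is obtained from $T_{i-1}$ by appending one column and deleting two rows, corresponding to the single index that migrates from the ``outside'' blocks to the ``inside'' block as $i$ grows by one. Consequently,
$$ \rank(T_i) - \rank(T_{i-1}) \in \{-2,-1,0,1\}. $$
The four slopes in the profile $R_{k,t}$ are exactly $+1, 0, -1, -2$ and they appear in this order, so the task reduces to showing that a greedy construction which realizes these slopes in sequence is always executable.

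The construction has four phases. \emph{Phase 1 (slope $+1$):} for $i=1,2,\ldots$, pick an unused index so that $\rank(T_i)=\rank(T_{i-1})+1$, continuing while possible; let $k$ be the last step that succeeds. \emph{Phase 2 (slope $0$):} continue, now picking indices so that $\rank(T_i)=k$ is preserved; let $t$ denote the last such step. \emph{Phase 3 (slope $-1$):} from $i=t+1$ onward, every remaining choice must drop the rank by at least one, and we pick one that drops it by exactly one. \emph{Phase 4 (slope $-2$):} once the row dimension of $T_i$ becomes the binding constraint, each step automatically costs exactly two units of rank. The parameters $k$ and $t$ produced by the algorithm will be the ones witnessing the lemma.

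The real work is to verify that each greedy step is executable, i.e., that the set of unused indices always contains one producing the prescribed rank change. Phases 1, 2, and 4 are essentially definitional: Phase 1 ends exactly when no unused index extends the column span of the top block of $T_i$, Phase 2 ends exactly when no unused index preserves rank, and Phase 4 is enforced by the shrinking row dimension (each step removes two rows, and in this tail regime the row count is the binding constraint, forcing a loss of exactly $2$). I expect the main obstacle to be Phase 3: we must rule out a \emph{double drop} (the only available index decreases rank by two) before the threshold $i=2n-k-t$ where such drops become genuinely forced. I would handle this by a linear-algebraic count showing that a simultaneous failure of every unused index to produce a single-unit drop forces the remaining off-diagonal blocks into a configuration that occurs exactly when $i$ has reached $2n-k-t$, matching the transition point to Phase 4 prescribed by $R_{k,t}$. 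Together with the trivial monotonicity observation above, this greedy procedure produces the desired $\sigma$.
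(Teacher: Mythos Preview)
Your greedy four-phase scheme is the right skeleton, and the paper does essentially this. But the proposal has a genuine gap at the point you yourself flag: you never actually argue Phase~3, and the parts you label ``definitional'' are not. Specifically, you assert that after Phase~2 ends ``every remaining choice must drop the rank by at least one'', and that in Phase~4 the row dimension ``forces'' a drop of exactly two. Neither follows from what you have written. Row-dimension binding only forces the row-deletion loss $b_i=2$; the column-addition gain $a_i$ could still be $1$, yielding slope $-1$, not $-2$. More seriously, nothing in your setup prevents a higher slope from becoming available again after a phase has ended, so the claim that the algorithm traces out exactly $R_{k,t}$ needs a monotonicity argument you have not supplied. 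Your suggestion of a ``linear-algebraic count'' pinning the double-drop threshold at $2n-k-t$ is not fleshed out and is not how the paper proceeds.

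The paper's key device, which you are missing, is twofold. First, the greedy criterion is \emph{lexicographic}: at step $i$ one first minimizes the row-deletion loss $b_i\in\{0,1,2\}$ over all unused indices, and only then maximizes the column-addition gain $a_i\in\{0,1\}$; this is not the same as maximizing $a_i-b_i$. Second, the whole lemma is reduced to a swap/exchange argument (the paper's Lemma~\ref{ab}): if $b_i>b_{i+1}$, then exchanging the choices at steps $i$ and $i+1$ produces a strictly smaller $b_i$ without disturbing earlier steps, contradicting the greedy rule; and if $b_i=b_{i+1}$ but $a_i<a_{i+1}$, the same swap raises $a_i$. This yields that $b_i$ is non-decreasing and $a_i$ is non-increasing on each constant-$b$ block, which immediately forces the slope sequence $a_i-b_i$ to run through $+1,0,-1,-2$ in order and hence equals $R_{k,t}$ for the $k,t$ determined by the run. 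The lexicographic rule is what makes the swap argument clean (you can argue about $b_i$ independently of $a_i$); under your ``maximize the net slope'' rule the same exchange need not contradict the greedy choice. This exchange lemma is the missing idea in your proposal.
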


\begin{proof}[Proof of the Permutation Lemma] $ $

Let $$ x_k':= \left[\begin{array}{c} M(k;>k)^T\\ M(>k;k)\;\;\, \end{array}\right]. $$

Note that $x_k$ and $x_k'$ differ only by the sign of the first $n-k$ entries. Recall that $T_i$ is a $2\left(n-i-1\right) \times i $ matrix. By the definition of $T_i$,  we obtain  $T_{i}$ from $T_{i-1}$  in two steps (see Figure \ref{fig3.2}):

\begin{figure}[ht]\label{fig3.2}

\begin{tikzpicture}[line cap=round,line join=round,>=triangle 45,x=0.32cm,y=0.32cm]
\clip(-1.8683269820069491,-1.55149788766866) rectangle (76.1674357222626,22.94445973526012);

\fill[line width=0.8pt,fill=black,fill opacity=0.2] (4.,20.) -- (4.,19.) -- (13.,19.) -- (13.,20.) -- cycle;
\fill[line width=0.8pt,fill=black,fill opacity=0.2] (4.,10.) -- (4.,9.) -- (13.,9.) -- (13.,10.) -- cycle;

\fill[line width=0.8pt,fill=black,fill opacity=0.2] (41.5,19.) -- (41.5,1.) -- (43.,1.) -- (43.,19.) -- cycle;

\fill[line width=0.8pt,fill=black,fill opacity=0.05] (4.,20.) -- (4.,0.) -- (13.,0.) -- (13.,20.) -- cycle;
\fill[line width=0.8pt,fill=black,fill opacity=0.05] (33.,19.) -- (33.,1.) -- (43.,1.) -- (43.,19.) -- cycle;
\fill[line width=0.8pt,fill=black,fill opacity=0.05] (18.5,19.02) -- (18.5,1.02) -- (27.,1.02) -- (27.,19.) -- cycle;
\draw [line width=0.8pt] (4.,20.)-- (4.,0.);
\draw [line width=0.8pt] (4.,0.)-- (13.,0.);
\draw [line width=0.8pt] (13.,0.)-- (13.,20.);
\draw [line width=0.8pt] (13.,20.)-- (4.,20.);
\draw [line width=0.8pt] (4.,10.)-- (13.,10.);
\draw [line width=0.8pt] (18.5,19.)-- (18.5,1.);
\draw [line width=0.8pt] (18.5,1.)-- (27.,1.);
\draw [line width=0.8pt] (27.,1.)-- (27.,19.);
\draw [line width=0.8pt] (27.,19.)-- (18.5,19.);
\draw [line width=0.8pt] (18.5,10.)-- (27.,10.);
\draw [line width=0.8pt] (33,19.02)-- (33,1.02);
\draw [line width=1.2pt, dash pattern=on 5pt off 5pt] (41.5,1.)-- (41.5,19.);
\draw [line width=0.8pt] (33,10.)-- (43,10.);
\draw [line width=0.8pt] (33,19.)-- (43,19.);
\draw [line width=1.2pt,dash pattern=on 5pt off 5pt] (4,9.)-- (13,9.);
\draw [line width=1.2pt,dash pattern=on 5pt off 5pt] (4,19.)-- (13,19.);
\draw [line width=0.8pt] (43,19.)-- (43,1.0);
\draw [->,line width=0.8pt] (14.,10.) -- (17.5,10.);
\draw [->,line width=0.8pt] (28.5,10.) -- (32.,10.);
\draw (7.2,22.7) node[anchor=north west] {$T_{i-1}$};
\draw (37.3,21.7) node[anchor=north west] {$T_{i}$};
\draw (4.8,15.7) node[anchor=north west] {$M(<i; >i)^T$};
\draw (5,5.7) node[anchor=north west] {$M(<i;>i)$};
\draw (18.2,15.7) node[anchor=north west] {$M(<i;>i+1)^T$};
\draw (18.4, 6.7) node[anchor=north west] {$M(<i;>i+1)$};
\draw [line width=0.8pt] (43,10.)-- (40.5,10.);
\draw (32.8,15.8) node[anchor=north west] {$M(\leq i; >i+1)^T$};
\draw (33.1,6.7) node[anchor=north west] {$M(\leq i; >i+1)$};
\draw [line width=0.8pt] (33.,1.)-- (43,1.);
\end{tikzpicture}

\caption{A graphical representation of the process of creating $T_i$ from $T_{i-1}$.}
\label {fig3.2}
\end{figure}
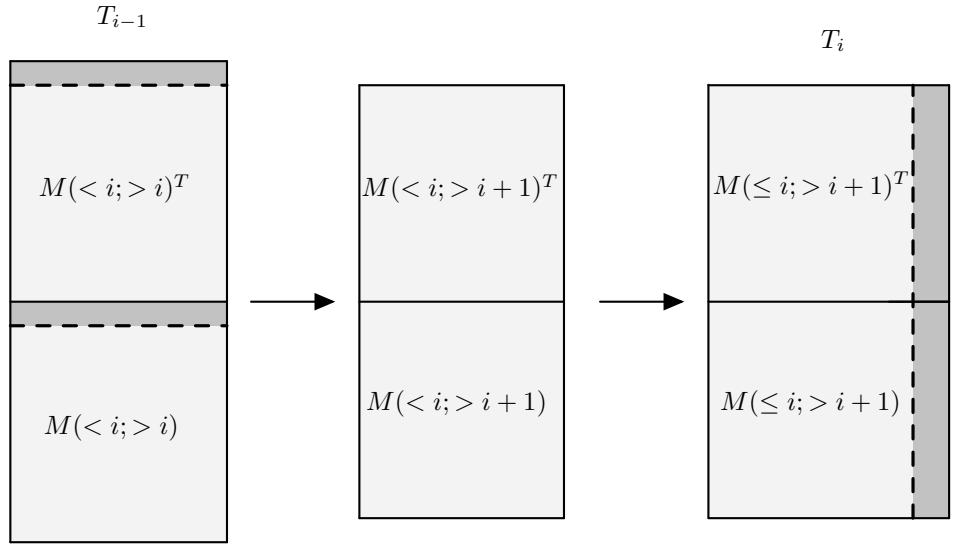

\begin{itemize}
\item First,  we delete the first and the $(n-i)^{th}$ row of $T_{i-1}$. This decreases the rank by at most $2$. Call this reduction $b_i$.

\item Next, we augment  $T_{i-1}$ by $x_i'$. This increases  the rank by at most $1$. Call this increment $a_i$.
\end{itemize}

\noindent Thus, we have 
\begin{equation}
\rank(T_i)=\rank(T_{i-1}) - b_i + a_i,
\end{equation}
where $a_i\in\{0,1\}$ and $b_i\in\{0,1,2\}$.

The desired permutation $\sigma$ is defined as the product of $n$ transpositions $\sigma: =\prod_{i=1}^n \sigma_i$, where $\sigma_i = (i, s_i) $ for some index $s_i \ge i $.
Let $M^{[i]} := M_{\prod_{k=1}^i  \sigma_k }$. We have $M_{\sigma} = M^{[n]}$, and $M^{[i]} $ is obtained from $M^{[i-1]} $ by applying $\sigma_i$, namely, swapping the $i^{\text{th}}$ row and column with the 
$s_i^{\text{th}}$ row and column.

 The index $s_i $ is defined to basically maximize the quantity $a_i - b_i$ (the gain in the rank). We use the following algorithm:
 
 \begin{enumerate}
  	\item Find indices $j \ge i$ such that the transposition $(i,j)$ minimizes the $b_i$ of $M^{[i-1]}_{(i,j)}$;  
	\item Among those $j$, pick one which maximizes $a_i $ (if we still have ties, we pick the smallest index);
	\item Set $s_i = j$ which implies $\sigma_i := (i,s_i)$ as desired.
\end{enumerate}
		
Our claim is an easy consequence of Lemma \ref{ab} below.

\end{proof}

\begin{lemma} \label{ab}  The sequence $b_i$ (of $M_\sigma$) is non-decreasing. In other words, there are indices $1 \le j_1 \le j_2 \le n$ such that 
	$b_i =0 $ for  $i \in [1,j_1-1] $, $b_i=1$ for $i \in [j_1, j_2 -1]$, and $b_i =2$ for $i \in [j_2, n]$.  Furthermore, the sequence 
	$a_i$ is non-increasing  for $i \in [1, j_1-1]$, $i \in [j_1, j_2-1] $ and $i \in [j_2-1, n]$. 
		\end{lemma}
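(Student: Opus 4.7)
The plan is to prove both parts of Lemma \ref{ab} by strong induction on $i$, leveraging the greedy optimality of the algorithm.

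As a preliminary reformulation, for $S \subseteq \{1, \ldots, n\}$ define $W(S)$ to be the $2(n-|S|) \times |S|$ matrix with columns indexed by $S$ and with two rows for each element of the complement (one encoding its ``column-role'' entries and one its ``row-role'' entries against $S$). Setting $S_i = \{\pi_1, \ldots, \pi_i\}$, where $\pi_j$ denotes the original index placed at position $j$ of $M_\sigma$, one verifies that $T_{i-1}$ equals $W(S_i)$ with the column for $\pi_i$ removed, while $T_i$ equals $W(S_i)$ with the two rows for $\pi_{i+1}$ removed. Thus $a_i$ is the rank contribution of the column for $\pi_i$ in $W(S_i)$ and $b_i$ is the rank contribution of the two rows for $\pi_{i+1}$.

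For the non-decreasing property of $b_i$, I would argue by induction. Assuming $b_1 \le \cdots \le b_{i-1}$, suppose for contradiction that $b_i < b_{i-1}$. Then removing the two rows for $\pi_{i+1}$ from $W(S_i)$ drops the rank by less than the analogous drop at step $i-1$. An exchange argument---permuting which element plays the ``deleted'' role at step $i-1$---would yield an alternative trajectory achieving $b_{i-1}' < b_{i-1}$, contradicting the greedy minimization there. The key ingredient is submodularity of the rank function $\mu(Q) := \rank$ of the rows indexed by $Q$: the marginal contribution of a pair of rows is non-increasing in the base row-set, which justifies the exchange. For the non-increasing property of $a_i$ within each $b$-regime, a parallel exchange argument works: within a segment of fixed $b$, if $a_i > a_{i-1}$, one could rearrange at step $i-1$ either to strictly reduce $b_{i-1}$ (contradicting greedy) or to strictly increase $a_{i-1}$ (contradicting the tie-breaking rule).

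The main obstacle will be the slight disconnect between the algorithm's local optimization (which minimizes $b_i$ of the intermediate matrix $M^{[i]}$) and the lemma's claim (which concerns $b_i$ of the final $M_\sigma$): the content at position $i+1$ can shift between these two matrices through later transpositions. To resolve this, one shows that the greedy's choice of $\pi_{i+1}$ at step $i+1$ is consistent with minimizing $b_i$ of the final matrix, so the step-$i$ and final-matrix notions of $b_i$ agree. Once this is settled, the overlap at $j_2 - 1$ in the third $a$-segment is explained by the algorithm's strict preference for smaller $b$: the jump from $b=1$ to $b=2$ occurs only when forced by the row-rank bound $2(n-i)$ becoming binding, and at this forced transition $a$ must simultaneously drop.
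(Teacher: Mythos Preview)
Your overall strategy---an exchange argument driven by greedy optimality, with submodularity of the rank function doing the work---is precisely the paper's approach. The paper carries out the exchange concretely: it sets $\sigma'=\sigma\circ(i,i+1)$ and shows, by dropping the last coordinate from each vector and then enlarging the spanning set, that a span membership witnessing small $b_{i+1}$ in $M_\sigma$ forces the corresponding span membership witnessing small $b_i$ in $M_{\sigma'}$, contradicting the greedy choice at step~$i$. This is exactly the ``marginal contribution of a row pair is monotone in the base set'' principle you invoke, written out by hand rather than named.

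There is, however, a genuine error in your preliminary reformulation. You correctly observe that $T_{i-1}$ is $W(S_i)$ with the $\pi_i$-column removed and $T_i$ is $W(S_i)$ with the $\pi_{i+1}$-rows removed, but you then assert that $a_i$ and $b_i$ are the contributions of that column and those rows \emph{to $W(S_i)$}. That is not what the paper defines. The paper first deletes the $\pi_{i+1}$-rows from $T_{i-1}$, obtaining an intermediate matrix $U_i$ with columns $S_{i-1}$ and rows $\{\pi_{i+2},\dots,\pi_n\}$, and only then adjoins the $\pi_i$-column. Hence the paper's $b_i=\rank(T_{i-1})-\rank(U_i)$ and $a_i=\rank(T_i)-\rank(U_i)$, whereas your quantities are $\rank(W(S_i))-\rank(T_{i-1})$ and $\rank(W(S_i))-\rank(T_i)$. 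The two pairs agree on the difference $a_i-b_i=\rank(T_i)-\rank(T_{i-1})$, but not individually---precisely because rank is submodular rather than modular. Since the lemma asserts monotonicity of the \emph{individual} sequences $(b_i)$ and $(a_i)$, establishing it for your alternative pair does not prove the stated lemma (though it would still suffice for the Permutation Lemma, which only uses the differences). Separately, the ``obstacle'' you flag about $M^{[i]}$ versus $M_\sigma$ is a genuine ambiguity in the paper's exposition as well, and your proposed resolution is no more explicit than the paper's; and your closing explanation of the $j_2-1$ overlap is not supported by anything earlier in the argument.
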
 	
		
\begin{proof}[Proof of Lemma \ref{ab}]
The intuition behind this proof is that as $i$ increases, the dimensions of the $T_i$'s work in our favor.  Let  $\sigma' : = \sigma \circ (i,i+1)$, where $(i,i+1)$ is the transposition which swaps $i$ and $i+1$ and let 
		$$ r_j(>s) : = M(j;>s) \text{ and } c_j(>s) := M(>s;j) \text{ for any }j>0.$$ 

The lemma follows from two keys observations:  

\begin{itemize}
	
	\item $b_i \le b_{i+1}$ for all $1\le i <n$.\newline
	
	Suppose $b_i > b_{i+1}$. We will show that in this case, the matrix $M_{\sigma'}$ would have a strictly smaller value associated to $b_i$, leaving $a_1,...,a_{i-1}$ and $b_1,...,b_{i-1}$ the same.
	$ $ \newline
	$ $ \newline
Consider the first row of $T_{i+1}$, that is $c_{i+3}^T(\leq i+1)$. We show that if $c_{i+3}^T(\leq i+1)$ belongs to the span of the rows of $T_{i+1}$ when the first and the $n-i-1^{th}$ rows are excluded, then $c_{i+3}^T(\leq i)$ belongs to the span of the rows of $T_{i}$ when the second and the $n-i+1^{th}$ rows are excluded. To see this, consider the following implications:
	 \begin{align*}
	 	&c_{i+3}^T(\leq i+1)   \in \Span \left(  \{ r_l(\leq i+1) \big | l \geq i+3 ,   l \not = i+2\} \cup \{ c_l^T(\leq i+1) \big | l \geq i+3 \}  \right) \Longrightarrow \\
		& \;\;\;\;\;\;\;\;\;\;\;\;\;\;\;(\text{we drop the last coordinate from each vector})\\
		& c_{i+3}^T(\leq i)  \in \Span \left(  \{ r_l(\leq i) \big | l \geq i+3, l \not = i+2 \} \cup \{ c_l^T(\leq i) \big | l \geq i+3 \}  \right) \Longrightarrow \\
		& \;\;\;\;\;\;\;\;\;\;\;\;\;\;\; (\text{we add two more vectors to the space})\\
		& c_{i+3}^T(\leq i)  \in \Span \left(  \{ r_l(\leq i+1) \big | l \geq i+2, l \not = i+2 \} \cup \{ c_l^T(\leq i) \big | l \geq i+1, l \not= i+2\}  \right).	 
	\end{align*}	
	
		This exact argument works if we replace $c^T_{i+3}(\leq i+1)$ with $r_{i+3}^T(\leq i+1)$. That is, if $b_{i+1} < b_i$, then $M_{\sigma'}$ has a strictly smaller value associated to $b_i$, which is a contradiction. 
	
	\item If $b_i=b_{i+1}$, then $a_i \ge a_{i+1}$.\newline
	
	Suppose $b_i = b_{i+1}$ and $a_i < a_{i+1}$, that is $a_i = 0$ and $a_{i+1} = 1$. Similarly, we will show that the matrix $M_{\sigma'}$ would have a strictly bigger value associated to $a_i$, leaving $a_1,...,a_{i-1}$ and $b_1,b_2,...,b_{i}$ the same. Firstly, note that by the first part argument, $M_{\sigma'}$ will generate a smaller (or equal) value for $b_i$. \newline 
	 Consider the last column of $T_{i+1}$, that is $  \left[\begin{array}{c} r_{i+1}^T(>i+2)\\c_{i+1}(>i+2) \end{array} \right] $. Since $a_{i+1} = 1$ it means that
	
		$$  \left[\begin{array}{c} r_{i+1}^T(>i+2)\\c_{i+1}(>i+2) \end{array} \right]  \not \in \Span \left \{ \left[\begin{array}{c} r_{1}^T(>i+2)\\c_{1}(>i+2) \end{array} \right], \left[\begin{array}{c} r_{2}^T(>i+2)\\c_{2}(>i+2) \end{array} \right], ..., \left[\begin{array}{c} r_{i}^T(>i+2)\\c_{i}(>i+2) \end{array} \right] \right \} $$
		
which, by adding two coordinates to every vector and deleting the last vector of the span implies

		$$  \left[\begin{array}{c} r_{i+1}^T(>i+1)\\c_{i+1}(>i+1) \end{array} \right]  \not \in \Span \left \{ \left[\begin{array}{c} r_{1}^T(>i+1)\\c_{1}(>i+1) \end{array} \right], \left[\begin{array}{c} r_{2}^T(>i+1)\\c_{2}(>i+1) \end{array} \right],..., \left[\begin{array}{c} r_{i-1}^T(>i+1)\\c_{i-1}(>i+1) \end{array} \right] \right \}.$$
	
In other words, $M_{\sigma'}$ has a strictly bigger value associated to $a_i$, which is a contradiction. 
\end{itemize}

\end{proof}

\begin{observation}\label{obs1}
Let us make some observations about this algorithm. 
	\begin{itemize}
		\item Firstly, as $T_i$ has dimensions $2(n-i-1) \times i$, note that if $\rank(T_i) < n-i-2$, we can always find a permutation $\sigma = (i,k)$ with $k\geq i$ such that 
		$$ \rank\left(T_i(M_n)\right) = \rank \left(T_{i}(M_{n,\sigma})\text{ with its first and the $(n-i)^{th}$ rows removed} \right).$$
		This implies that $b_1 = b_2 = ... =b_{n-k-2} = 0$.
		\item Secondly, as $\rank(T_t) < \rank(T_{t+1})$, it means that $b_{t+1} \geq 1$. Together with the first observation implies that:
			$$ t \geq n-k-2.$$
		\item Thirdly, since $a_i - b_i \in \{ -2, -1 , 0 ,1\}$ we have: 
			$$ k \leq 2n/3, \text{ as } \rank(T_k) = k \text{ and } \rank (T_n) = 0,\;\;\;\,$$
			$$ t+ k/2 \leq n, \text{ as } \rank(T_t) = k \text{ and } \rank (T_n) = 0,$$
			 $$ t+ k \geq n, \text{ as } \rank(T_t) = k \text{ and } \rank (T_n) = 0.\;\;\;$$
	\end{itemize}
\end{observation}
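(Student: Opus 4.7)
The plan is to dispatch the three bullets of Observation \ref{obs1} in order, using the greedy algorithm of Lemma \ref{lemma2} and the monotonicity from Lemma \ref{ab}.

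For the first bullet, the key is a dimension count. The matrix $T_i$ has size $2(n-i-1)\times i$, and a rank strictly less than $n-i-2$ leaves at least $n-i$ rows linearly dependent on the others---strictly more than half of the total. I would argue that one can always find a transposition $(i,k)$ with $k\ge i$ which, by swapping both rows and columns $i$ and $k$, relocates two redundant rows into the first and the $(n-i)$-th positions of $T_i$, so that deleting them does not drop the rank; the greedy rule then selects such a transposition and forces $b_{i+1}=0$. To conclude $b_1=\cdots=b_{n-k-2}=0$, I would combine this with the trivial observation that in the rising phase ($i\le k$) one automatically has $b_{i+1}=0$ from $a_{i+1}-b_{i+1}=1$, while in the plateau phase $\rank(T_{j-1})=k$ is $<n-j-1$ exactly when $j\le n-k-2$, so the dimension-count argument applies throughout.

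The second bullet follows immediately from \eqref{RRR}: since $R_{k,t}(t+1)-R_{k,t}(t)=-1$ we must have $a_{t+1}-b_{t+1}=-1$ and hence $b_{t+1}\ge 1$. Combined with the first bullet, this can only be consistent if $t+1>n-k-2$, giving $t\ge n-k-2$. The three inequalities in the third bullet are then arithmetic. The bound $k\le 2n/3$ follows from $\rank(T_k)=k$, $\rank(T_n)=0$ and $a_i-b_i\ge -2$, which together give $2(n-k)\ge k$. The bounds $t+k\ge n$ and $t+k/2\le n$ express, respectively, that $R_{k,t}(2n-k-t)=2k+2t-2n\ge 0$ and that the middle decreasing phase has non-negative length $2n-k-2t\ge 0$.

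The main obstacle is the first bullet: while the dimension count easily produces many redundant rows of $T_i$, one must verify that a single row-and-column transposition can coordinate the relocation of \emph{two} redundant rows into the two targeted slots simultaneously---one in the top block $M(<i+1;>i+1)^T$ and one in the bottom block $M(>i+1;<i+1)$. The fact that swapping row $i$ with row $k$ also swaps column $i$ with column $k$, and hence alters which columns actually appear in $T_i$, means the bookkeeping is more subtle than a naive pigeonhole.
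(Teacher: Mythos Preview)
Your handling of the second and third bullets is correct and matches the paper's inline justifications; in fact your derivations of $t+k\ge n$ and $t+k/2\le n$ via the non-negativity of $R_{k,t}(2n-k-t)=2k+2t-2n$ and of the phase length $2n-k-2t$ are cleaner than the paper's terse ``as $\rank(T_t)=k$ and $\rank(T_n)=0$.''

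For the first bullet, your dimension count is the right idea and is essentially all the paper offers as well---the Observation is stated with only the inline justification you see. The obstacle you flag, however, largely dissolves once two points are observed. First, the two ``targeted slots'' (the first row of the top block and the first row of the bottom block of $T_{i-1}$) correspond to the \emph{same} index of $M$; because the permutation acts by conjugation $S_\sigma M S_\sigma^T$, a single transposition moves that row and that column of $M$ together, and hence moves the entire row \emph{pair} in $T_{i-1}$ as a unit---no separate coordination between blocks is needed. Second, the columns of $T_{i-1}$ are indexed by $1,\dots,i-1$, while the greedy transposition at step~$i$ only touches indices $\ge i$; thus the swap alters only the row content of $T_{i-1}$ and never its column set, so your worry about ``which columns actually appear'' does not arise for the matrix that governs $b_i$. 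With these two remarks the pigeonhole goes through: if the rank $r$ is below the number of row pairs, any $r$ spanning rows touch at most $r$ pairs and leave at least one pair whose deletion does not drop the rank, and the greedy rule then forces $b_i=0$.
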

$ $

\begin{definition}
We define $\CM_{k,t}(C)$ to be the set of all C-normal matrices $M$ with $\pm 1$ entries which satisfy condition \eqref{RANK} from Lemma \ref{lemma2}
\end{definition}

\subsection {A recursion} 
In this subsection, we use Lemma \ref{lemma2} to derive a recursive bound. Let $C$ be a fixed $n \times n$ matrix. In the rest of the paper $C$ will remain fixed and, for simplicity, we will write $\CM_{k,t}$ instead of $\CM_{k,t}(C)$. The following lemma allows us to exploit the fact that if $M$ is in the form of equation \eqref{RANK}, we can control $\P(M \text{ is normal})$. To do this, we use a conditional argument. We condition on the elements of the main diagonal and on the elements on the first $i$ rows and columns of $M$ for various $i$. We denote the set of these elements by $D_i$, that is, 
	$$D_i : = \{ M(i',i') \text{ where } 1\leq i' \leq n\} \cup \{ M(i',j') \text{ where either } i' \leq i  \text{ or } j' \leq i  \} .$$
	Recall that:
	$$ x_i:= \left[\begin{array}{c} -M_n(i;>i)^T\\ \;\;\;M_n(>i;i) \;\;\; \end{array}\right]. $$
which means that $D_i$ is uniquely determined by the entries of $x_i$ and the elements of $D_{i-1}$.

\begin{lemma}[Recursion Lemma]\label{lemma10}
For any $1\leq k \leq t \leq n$ and $1\leq i \leq n$, we have

$$\sup_{D_{i-1}} \P(M_n \in \CM _{k,t}|D_{i-1})\leq 2^{-\min \left(R_{k,t}(i-1),2n-2i\right) } \cdot \sup\limits_{D_i}  \P(M_n \in \CM_{k,t}|D_i).$$
\end{lemma}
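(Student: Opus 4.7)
My plan is to condition on $D_{i-1}$ and integrate out the $2(n-i)$ new random entries of $M_n$, which are exactly packaged into the vector $x_i$. Since the coordinates of $x_i$ are the off-diagonal entries of the $i$-th row and $i$-th column, they are disjoint from the entries recorded in $D_{i-1}$, so $x_i$ is uniform on $\{\pm 1\}^{2(n-i)}$ and independent of $D_{i-1}$, while $D_i$ is measurable with respect to $(D_{i-1},x_i)$. The law of total probability thus gives
$$\P(M_n\in\CM_{k,t}\mid D_{i-1}) \;=\; 2^{-2(n-i)}\sum_{x_i\in\{\pm 1\}^{2(n-i)}}\P(M_n\in\CM_{k,t}\mid D_i),$$
and replacing every summand by $\sup_{D_i}\P(M_n\in\CM_{k,t}\mid D_i)$ reduces the lemma to showing that at most $2^{2(n-i)-\min(R_{k,t}(i-1),\,2n-2i)}$ of the $2^{2(n-i)}$ possible values of $x_i$ give a nonzero contribution.

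To count these $x_i$, I extract from the $C$-normality condition its consequence at level $i$: the linear system $T_{i-1}^T x_i = c$ of equation \eqref{eq4}, in which both $T_{i-1}$ and $c\in\Z^{i-1}$ are measurable with respect to $D_{i-1}$ (the entries of $T_{i-1}$ all lie in the first $i-1$ rows and columns of $M_n$, and $c$ is an explicit polynomial in those entries together with the diagonal of $M_n$ and the prescribed matrix $C$). Because any $M_n\in\CM_{k,t}$ satisfies this system, Corollary \ref{lemma1} applied to the $\pm 1$ matrix $T_{i-1}^T$ bounds the number of admissible $x_i\in\{\pm 1\}^{2(n-i)}$ by $2^{2(n-i)-r}$, where $r:=\rank(T_{i-1})$.

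Finally I identify $r$ with $\min(R_{k,t}(i-1),\,2n-2i)$. If $D_{i-1}$ fails to satisfy $\rank(T_{i-1})=R_{k,t}(i-1)$, no extension of $D_{i-1}$ can be in $\CM_{k,t}$ and the left-hand side is already zero, so the inequality is trivial. Otherwise $r=R_{k,t}(i-1)$, and because $T_{i-1}$ has only $2(n-i)=2n-2i$ rows, the automatic dimensional cap $r\le 2n-2i$ yields $r=\min(R_{k,t}(i-1),\,2n-2i)$. Substituting this into the count from the previous step and taking the supremum over $D_{i-1}$ gives the claim. The one conceptual point to watch is that $T_{i-1}$ and $c$ really are deterministic functions of $D_{i-1}$, so that the single application of Odlyzko's corollary at level $i$ is legitimate; the dimensional cap in the $\min$ is a minor but necessary piece of bookkeeping when $R_{k,t}(i-1)$ would otherwise exceed $2n-2i$.
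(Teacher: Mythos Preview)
Your proof is correct and follows essentially the same approach as the paper: condition on $D_{i-1}$, use that any $M_n\in\CM_{k,t}$ forces the linear relation $T_{i-1}^T x_i=c$ with $c$ determined by $D_{i-1}$, apply Odlyzko's bound (Corollary~\ref{lemma1}) to count admissible $x_i$, and bound by the supremum over $D_i$. Your explicit handling of the case $\rank(T_{i-1})\neq R_{k,t}(i-1)$ (forcing the left side to vanish) is a small clarification over the paper's presentation, but the argument is the same.
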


\begin{proof}

Note that if $M\in \mathcal M_{k,t}$ then $M$ is $C$-normal, and so by relation \eqref{eq4} 
	\begin{equation}\label{andrei}
	T^T_{i-1}x_i=c,
	\end{equation}
	
\noindent where $c$ is a  vector  uniquely determined by $D_{i-1}$. Thus, conditioned on $D_{i-1}$, $x_i$ belongs to a subspace $H$ of dimension $\max \{ 2n-2i-\rank(T_{i-1}), 0 \} $.  
Recall that by the permutation lemma, $ \rank (T_{i-1}) =  \rank_{i-1}(M_{\sigma}) = R_{k,t}(i-1)$.
Using Lemma \ref{claim1} and the independence of the entries of $M_n$, we have 

\begin{align}\label{eq22}
	 \P(M_n \in \CM_{k,t}|D_{i-1}) &=  \sum_{h\in H \cap \{\pm\}^{2n-2i} } \P(M_n \in \CM _{k,t}|D_i) \cdot \P(x_i = h) \nonumber \\ 
	 & = \sum_{h\in H \cap \{\pm\}^{2n-2i}  }  \P(M_n \in \CM _{k,t}|D_i) \cdot \P(x_i = h) \cdot \I(h \text{ satisfies } \eqref{andrei} )\nonumber \\
	 & \leq \sup_{D_i} \P(M_n \in \CM_{k,t}|D_i) \cdot \sum_{h \in H \cap \{\pm\}^{2n-2i} } 2^{-(2n-2i)} \cdot \I(h \text{ satisfies }\eqref{andrei}) \nonumber \\
	& \hspace{0cm}\leq 2^{-(2n-2i)+\max(2n-2i-R_{k,t}(i-1) ,0)} \sup_{D_i} \P(M_n \in \CM_{k,t}|D_i) \nonumber \\
	& \hspace{0cm} = 2^{-\min \left(R_{k,t}(i-1),2n-2i\right) } \cdot \sup\limits_{D_i}  \P(M_n \in \CM_{k,t}|D_i)
	\end{align}
	
\end{proof}

\section{Proof of Theorem 1.3}

Note that 
	$$ \P(M_n \text{ is C-normal }) \leq \sum_{k,t} \P(M_n \in \CM_{k,t}),$$ 
as $\cup_{k,t} \CM_{k,t}$ contains all C-normal matrices. Our goal is to bound $\P(M_n \in{\CM}_{k,t})$ for each $k,t \in \Z_+$. Note that for some specific values of $k$ and $t$, the problem is trivial. One can easily see from observation \ref{obs1} that $\CM_{k,t}$ is empty when $k+t<n$, $k > 2n/3$ or $t+k/2>n$.

The proof goes as follows: in sections \ref{s1} and \ref{s2} we present two different approaches, each providing bounds for different $k$ and $t$. In Section \ref{s3} we combine the two results to get the desired bound through an optimization process.

\subsection{The First Case}\label{s1}

\begin{lemma} \label{lemma3}
We have, for $1\leq k \leq \frac{2n}{3}$ and $\frac{k}{2}<n-t \leq k$

\begin{equation}\label{important}
\P(M_n \in\CM_{k,t})\leq
\begin{cases}
		2^{n^2+k^2+t^2+kt-2kn-2nt+o(n^2)}  & \mbox{if }k\geq \frac {n}{2}\\
 	        2^{t^2-3k^2+2kn+kt-2nt+o(n^2)}  & \mbox{if }k\leq \frac {n}{2}.
	 
\end{cases}
\end{equation}

In particular we have:
\begin{equation}\label{alpha1}
\P(M_n \text{ is C-normal}\,)\leq 2^{-0.25n^2+o(n^2)}.
\end{equation}
\end{lemma}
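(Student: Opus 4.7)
The approach is to iterate the Recursion Lemma (Lemma~\ref{lemma10}) over $i=1,2,\ldots,n$. Telescoping and using $\P(\CM_{k,t}\mid D_n)\le 1$ (since $D_n$ determines all entries of $M_n$) yields
\[ \P(M_n\in\CM_{k,t}) \;\le\; 2^{-S_{k,t}}, \qquad S_{k,t} \;=\; \sum_{i=1}^n \min\!\bigl(R_{k,t}(i-1),\,2n-2i\bigr). \]
The piecewise formula \eqref{RRR} splits this sum into four segments. Under the hypotheses $k\le 2n/3$ and $n-t>k/2$, the cap $2n-2i$ binds only in the fourth segment (where $R_{k,t}(i-1)$ itself equals $2n-2i$ up to an $O(1)$ shift). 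Replacing the sum by an integral and carrying out the algebraic simplification gives
\[ S_{k,t} \;=\; -\bigl(n^2+k^2+t^2+kt-2kn-2nt\bigr)+o(n^2), \]
which is the first-case bound of \eqref{important}. The argument is valid for all feasible $(k,t)$ and is sharp when $k\ge n/2$.

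For $k\le n/2$, the bound is sharpened by exploiting structural information beyond what the Recursion Lemma extracts from $C$-normality alone. Membership in $\CM_{k,t}$ forces the rank plateau $\rank T_i=k$ throughout $k\le i\le t$, which means the newly adjoined column of $T_i$ (obtained from $x_i'$ after the prescribed row deletions) must lie in a $k$-dimensional subspace of $\R^{2(n-i-1)}$. Applying Odlyzko's lemma (Lemma~\ref{claim1}) to this additional linear constraint on $x_i'$, in conjunction with the $C$-normality constraint already exploited by the Recursion Lemma, produces a strictly stronger per-step bound throughout the plateau. Summing this improved contribution and combining with the (unchanged) contributions from segments one, three, and four recovers the second-case bound
\[ S_{k,t}\;\ge\;-\bigl(t^2-3k^2+2kn+kt-2nt\bigr)+o(n^2). \]

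For \eqref{alpha1}, use the union bound $\P(M_n\text{ is }C\text{-normal})\le\sum_{k,t}\P(\CM_{k,t})$ and apply whichever of the two bounds is tighter at each $(k,t)$. By Observation~\ref{obs1} there are only $O(n^2)=2^{o(n^2)}$ feasible pairs, absorbed into $o(n^2)$. Optimizing the two quadratic forms over the feasibility region (interior critical points plus the boundaries $k=n/2$, $k=2n/3$, $t=n-k$, and $t=n-k/2$) shows that the least favourable exponent is $-n^2/4$, attained at $k=t=n/2$, where the two formulas coincide.

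The principal obstacle lies in the second paragraph: one must identify the extra linear constraint on $x_i'$ coming from the plateau $\rank T_i=k$, and combine it correctly with the $C$-normality constraint so that the exponent $t^2-3k^2+2kn+kt-2nt$ is recovered exactly (and not just an intermediate form differing by terms like $(k+t-n)^2$). The remaining ingredients — the asymptotic evaluation of the four-segment integral and the final optimization — are routine.
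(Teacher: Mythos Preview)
Your outline matches the paper's proof: iterate the Recursion Lemma for the first bound, and for $k\le n/2$ exploit the rank plateau to extract a column-space constraint on $x_i'$. The optimization in your last paragraph is also exactly what the paper does.

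One point in the second paragraph needs correction, and it is precisely the obstacle you flag. The claim ``the newly adjoined column of $T_i$ must lie in a $k$-dimensional subspace'' is useful only if that subspace is determined \emph{before} $x_i'$ is revealed; otherwise the statement is circular (every column of $T_i$ lies in the column space of $T_i$). What you actually need is $a_i=0$, i.e.\ the new column already lies in the column space of the row-deleted $T_{i-1}$. On the plateau $a_i=b_i$, so $a_i=0$ requires $b_i=0$, and Observation~\ref{obs1} guarantees $b_i=0$ only for $i\le n-k-2$. Hence the column-space constraint is justified on $[k+1,\,n-k-2]$, not on all of $[k,t]$.

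Relatedly, the paper does not use the two constraints ``in conjunction'' at the same step. On $[k+1,\,n-k-2]$ it uses only the column-space bound $2^{-(2(n-p-1)-k)}$, which there is stronger than the Recursion Lemma's $2^{-k}$; on $[n-k-1,\,n]$ it reverts to the Recursion Lemma. The crossover $2(n-p-1)-k=k$ occurs exactly at $p=n-k-1$, so the range of validity and the range of superiority coincide, and the exponent $t^2-3k^2+2kn+kt-2nt$ drops out from summing the two pieces separately.
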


\begin{proof}[Proof of Lemma \ref{lemma3}]
$ $

We keep the notation from the Lemma \ref{lemma10}, that is 
	$$D_i : = \{ M(i',i') \text{ where } 1\leq i' \leq n\} \cup \{ M(i',j') \text{ where either } i' \leq i  \text{ or } j' \leq i  \} .$$

$ $
\begin{align*}
\P(M_n \in \CM_{k,t})   &=\sum_{D_0} \P(M_n \in \CM_{k,t}|D_0)\P(D_0)\\
					&\leq \sup_{D_0}\P(M_n \in \CM_{k,t}|D_0).
\end{align*}
By applying repeatedly Lemma \ref{lemma10},  and using Observation \ref{obs1} we have
\begin{align*}
	\sup_{D_0} \P(M_n \in \CM_{k,t}| D_0) & \leq 2^{-\min \left(R_{k,t}(0),2n-2\right) } \cdot \sup\limits_{D_1}  \P(M_n \in \CM_{k,t}|D_1)\\ 
						& \leq 2^{- R_{k,t}(1) - R_{k,t}(0)} \cdot \sup\limits_{D_2}  \P(M_n \in \CM_{k,t}|D_2)\\
						& ... \\
						& \leq 2^{-\sum_{i=0}^{2n-k-t-1} R_{k,t}(i)} \cdot \sup\limits_{D_{2n-k-t}}\P(M_n \in \CM_{k,t}|D_{2n-k-t})\\
						& \leq 2^{-\sum_{i=0}^{2n-k-t-1} R_{k,t}(i)}\cdot 2^{2(k+t-n)} \cdot \sup\limits_{D_{2n-k-t+1}}\P(M_n \in \CM_{k,t}|D_{2n-k-t+1})\\
						& ...\\
						& \leq 2^{-\sum_{i=0}^{2n-k-t-1} R_{k,t}(i)}\, \cdot 2^{-2\sum_{i=0}^{k+t-n} (k+t-n-i)}\\
						&\leq 2^{-\sum_{i=0}^{2n-k-t-1} R_{k,t}(i)}\,2^{-(k+t-n)^2+o(n^2)}.
\end{align*}

\noindent We conclude that
\begin{align}\label{bound}
	\P(M_n \in \CM_{k,t}) & \leq 2^{-(k+t-n)^2 -\sum_{i=1}^{2n-k-t}R_{k,t}(i)+o(n^2)}\nonumber\\
					 & \leq 2^{-(k+t-n)^2-k^2/2-(t-k)k-(3k/2+t-n)(2n-k-2t)+o(n^2)}\nonumber\\
					 & \leq 2^{n^2+k^2+t^2+kt-2kn-2nt+o(n^2)}.
\end{align}

\noindent If $k\leq \frac{n}{2}$, then the bound from \eqref{bound} is weak so we use a slightly different approach. Suppose that $M\in \CM_{k,t}$. By Observation \ref{obs1} we have $t\geq n-k$ so Lemma \ref{ab} implies:
	$$a_{k+1}-b_{k+1}=a_{k+2}-b_{k+1}=...=a_{n-k}-b_{n-k}=0.$$ 
	By Observation \ref{obs1} we also know that if $p\leq n-k-2$, then $b_p=0$ which implies that
	$$a_{k+1}=a_{k+2}=...=a_{n-k-2}=0.$$ 
	
\noindent It follows that we do not change the column spaces of $T_p$ for $k\leq p\leq n-k-2$ and, in particular,
	$$ x_i':= \left[\begin{array}{c} M_n(p;>p)^T\\ M_n(>p;p) \;\; \end{array}\right]\in \text{Column space} (T_{k}) \text{ for any $k < p \leq n-k-2$}.$$
\noindent Let $G$ denote the column space of $T_k$, that is $G$ is a $k$-dimensional space. Using Lemma \ref{claim1} for any $k<p\leq n-k-2$ and the independence of the entries of $M_n$, we have:
	\begin{align*}
	\sup_{D_{p-1}} \P( M_n \in \CM_{k,t}|D_{p-1}) & \leq \sum_{h\in G} \P(M_n \in \CM_{k,t}|D_p)\P( x_i' = h)\\
								     & \leq 2^{k-2(n-p-1)} \sup_{X_p\in \{\pm 1 \}^{2(n-p)}} \P(M_n\in \CM_{k,t}|D_p).
	\end{align*}

Now we can combine this result with Lemma \ref{lemma10}:
\begin{align}
	\P(M_n\in \CM_{k,t}|D)    &\leq 2^{-\sum_{i=0}^{k-1} R_{k,t}(i) }\sup_{D_k} \P(M_n\in \CM_{k,t}|D_k)\nonumber\\
						&\hspace{-1cm}\leq 2^{-k^2/2+o(n^2)} 2^{\sum_{i=k}^{n-k-2}(k-2(n-i-1))} \sup_{D_{n-k-1}} \P(M_n \in \CM_{k,t}|D_{n-k-1})\nonumber\\
						&\hspace{-1cm}\leq 2^{-n^2-5k^2/2+3nk+o(n^2)} 2^{-\sum_{i=n-k-1}^{2n-k-t-1}R_{k,t}(i)}\sup_{D_{2n-k-t}} \P(M_n\in \CM_{k,t}|D_{2n-k-t})\nonumber\\
						&\hspace{-1cm}\leq 2^{n^2-2k^2+2t^2-4nt+3kt+o(n^2)}2^{-(k+t-n)^2+o(n^2)}\nonumber\\
						&\hspace{-1cm}\leq 2^{t^2-3k^2+2kn+kt-2nt+o(n^2)}.
\end{align}

Note that if we maximize the bounds over all possible choices of $k$ and $t$ we conclude:
	$$ \P(M_n\in \CM_{k,t})\leq 2^{-0.25n^2+o(n^2)} $$
and the conclusion follows. The equality is reached when $k=t=n/2 +o(n)$.
\end{proof}

\subsection {The second case}\label{s2}
$ $

The idea is to bound  $\P(M_n \in{\CM}_{k,t})$ differently when $2n-2t-k$ is big.  Let $M \in \CM_{k,t}$ and let $T_{t}$ be defined with respect to $M$. Recall that $T_{t}$ has $t$ columns, $2(n-t-1)$ rows, rank $k$ and the property that for any $1\leq i \le n-t-1$, if we delete its $i^{th}$ and $\left(n-t-1+i\right)^{th}$ rows, then the rank decreases by at least one. This motivates the following definition.

\begin{definition}
Let $M$ be a fixed $2m \times q$ matrix with $\pm1$ values. We say that $M$ has property $\CP$ if, for any $1\leq i \leq m$, by deleting both the $i^{th}$ row and the $\left(i+m\right)^{th}$ row, we reduce the rank of $M$ by at least one.
\end{definition}

\begin{definition}\label{alpha}
Let $A:=\big{\{}\beta \, \big |\, \P\left(M_n \text{ is C-normal}\right)\leq 2^{-(\beta+o(1))n^2} \big{\}}$. We define
	$$\alpha := \limsup_{\beta\in A} \beta-0.0001.  $$
\end{definition}

Lemma \ref{lemma3} implies that
$$ \alpha \geq 0.2499.$$

\begin{lemma}
\label{Lemma3.3}
Given $1\leq k,t \leq n$ we have that:
	$$ \P\left(M_n \in \CM_{k,t}\right)\leq 2^{(1-\alpha)t^2-k^2/2-n^2+nk+o(n^2)}.$$
\end{lemma}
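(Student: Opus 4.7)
The plan is to combine three ingredients: the Recursion Lemma to handle the rank-growing phase up to index $k$; the definition of $\alpha$ applied to the upper-left $t \times t$ sub-block; and a counting argument that uses the rank-$k$ structure of $T_t$ together with property $\CP$.

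First, I would apply Lemma \ref{lemma10} iteratively from $D_0$ up through $D_k$. Since $R_{k,t}(i)=i$ for $0\le i\le k-1$, this pays a factor of $2^{-\sum_{i=0}^{k-1}R_{k,t}(i)+o(n^2)}=2^{-k(k-1)/2+o(n^2)}\approx 2^{-k^2/2+o(n^2)}$, producing the $-k^2/2$ contribution to the target exponent. Heuristically this is the cheapest part of the bound and the recursion lemma is designed exactly for it; up to $o(n^2)$ errors it cannot be avoided.

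Second, the normality equation $M_nM_n^T-M_n^TM_n=C_\sigma$ restricted to the $(i,j)$-block with $i,j\le t$ forces $AA^T-A^TA=C''$, where $A:=M_n(\le t;\le t)$ and $C''$ is completely determined by the entries of $M_n$ outside $A$ together with $(C,\sigma)$. By the definition of $\alpha$, the number of $t\times t$ $\pm 1$ matrices satisfying this fixed equation is at most $2^{(1-\alpha)t^2+o(t^2)}$ (union-bounding over the at most $n!=2^{o(n^2)}$ choices of $\sigma$), which supplies exactly the $(1-\alpha)t^2$ factor in the exponent. This is the step where the current best bound $\alpha$ enters recursively.

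Third, I would count the compatible configurations of the remaining entries of $M_n$ using the rank-$k$ structure of $T_t$ with property $\CP$. The column space of $T_t$ is a $k$-dimensional subspace of $\mathbb{R}^{2(n-t-1)}$, and by Lemma \ref{claim1} each of the $\pm 1$ columns constrained to lie in this subspace has at most $2^k$ choices. Parameterizing the subspace itself via $k$ ``pivot rows'' in a row-echelon-like fashion, and combining with the constraints propagating through the rank-decreasing phase for $i>t$, yields an additional factor of roughly $2^{nk+o(n^2)}$. Multiplying the three contributions, $|\CM_{k,t}|\le 2^{(1-\alpha)t^2+nk-k^2/2+o(n^2)}$, and dividing by $2^{n^2}$ gives the claimed bound.

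The main obstacle will be the bookkeeping in the third step. A naive count of rank-$k$ $\pm 1$ matrices of size $2(n-t-1)\times t$ yields the weaker factor $2^{2nk-kt-k^2+o(n^2)}$, which is much too loose; extracting the correct $nk$ (minus the $k^2/2$ already accounted for by the recursion) requires using property $\CP$ carefully together with the normality constraint on $A$ from Step 2, so that the $2^k$-in-a-subspace bound is applied to the ``right'' vectors without overcounting the column space relative to the $\alpha$-constrained block. Making this interaction precise, rather than any individual estimate, is where the proof should concentrate its effort.
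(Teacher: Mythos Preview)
Your three ingredients are indeed the ones the paper uses, but the way you splice them together does not work, and the crucial estimate is missing.

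\textbf{Steps 1 and 2 are incompatible.} Running the Recursion Lemma from $D_0$ to $D_k$ conditions on the first $k$ rows and columns of $M_n$. Since $k\le t$, this fixes the first $k$ rows and columns of the block $A=M_n(\le t;\le t)$. Once those $\approx 2kt-k^2$ entries of $A$ are determined, you cannot invoke the definition of $\alpha$ to get a $2^{-\alpha t^2}$ factor on $A$: that bound requires all $t^2$ entries of $A$ to be free Rademacher variables. At best you would get $2^{-\alpha(t-k)^2+o(n^2)}$, which is far too weak. The numerical coincidence that your Step~1 produces $-k^2/2$ and the final exponent also contains $-k^2/2$ is misleading; in the paper that term arises from a completely different cancellation.

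\textbf{The missing estimate.} The paper avoids this conflict by never using the recursion on $[0,k]$ in this lemma. Instead it first conditions on $T_t$, noting that for $M_n\in\CM_{k,t}$ the matrix $T_t$ must have rank $k$ \emph{and} property~$\CP$ (deleting rows $i$ and $n-t-1+i$ always drops the rank). The heart of the proof is the bound
\[
\P\bigl(T_t\ \text{has rank }k\ \text{and property }\CP\bigr)\le 2^{(2n-2t-k)(k-n)+o(n^2)},
\]
proved via a separate combinatorial argument (reducing to matrices whose first $k$ rows are independent, then showing each later row lies in a subspace of dimension $\le k-m+1$). Only after this does the paper apply the $\alpha$-bound to $A$ (now disjoint from $T_t$, so fully free), and finally run the recursion from $D_t$ onward to pick up $2^{k^2/2+n^2+t^2-2nk+2kt-2nt+o(n^2)}$. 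Multiplying these three factors gives exactly $(1-\alpha)t^2-k^2/2-n^2+nk$.

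Your Step~3 would have to deliver the property-$\CP$ bound above, but ``parameterizing the subspace via $k$ pivot rows'' only gives the naive rank-$k$ count, as you note yourself; the gain comes precisely from property~$\CP$, and you have no mechanism to exploit it. That is the genuine gap.
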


\begin{proof}[Proof of Lemma \ref{Lemma3.3}]
The intuition is that, given a uniformly random $2(n-t-1)\times t$ matrix with $\pm 1$ entries and rank $k$, the probability that it has \textit{property} $\CP$ is very small for particular values of $k$ and $t$.  

Note that by Observation \ref{obs1} we have that the probability in question is zero unless $n-k -2 \leq t \leq n-k/2$. We start by making two observations.

\begin{observation} \label{aa}
$ $

\begin{enumerate}[label=(\alph*)]

\item \label{aa1}
Let $M$ be a $2m \times q$ matrix, then for any $1\leq i \leq m$, we can swap the $i^{th}$ row of $M$ with the $\left(m+i\right)^{th}$ row of M without changing its property $\CP$ status. 

\item \label{aa2}
Let $M$ be a $2m \times q$ matrix, then for any $1\leq i<j \leq m$, we can swap the $i^{th}$ row of $M$ with the $j^{th}$ row of $M$ and the $\left(m+i\right)^{th}$ row of $M$ with the $\left(m+j\right)^{th}$ row of $M$, without changing its property $\CP$ status.

\end{enumerate}

\end{observation}

Given a matrix $M$ of rank $k$, it would be more convenient to bound the probability of having property $\CP$ if the first $k$ rows were linearly independent. It turns out that we only lose a factor of $2^{o(n^2)}$ if we consider only such matrices. A precise statement is given in Claim \ref{claim10}.

\begin{definition} We say that a matrix $M$ of rank $k$ has property $\CF_k$ if it has property $\CP$ and its first $k$ rows are linearly independent.
\end{definition}

\begin{claim} \label{claim10} 	
Let $M_{m,q}$ be a $2m \times q$ random matrix with Rademacher entries which take the values $\pm 1$ with probability $1/2$. We have
$$\P(M_{m,q} \text{ has property } \CP \text{ and rank }k) \leq 2^{o(m^2)} \, \P(M_{m,q} \text{ has property } \CF_k). $$
\end{claim}

	\begin{proof}[Proof of Claim \ref{claim10}]
		Let $M$ be a fixed matrix of dimension $2m \times q$ and rank $k$ which has property $\CP$. We prove that we can apply a series of operations described in Observation \ref{aa} to reduce it to a matrix which has property $\CF_k$. Since we have at most $\left(2m\right)! = 2^{o\left(m^2\right)}$ ways to permute the rows of $M$, the conclusion follows. $ $\newline
		
		 Suppose that there exists a fixed matrix $M$, that cannot be reduced to one with property $\CF_k$ using only operations from Observation \ref{aa}. Let $i\leq k$ be the biggest index such that there exists a matrix $M'$, formed by applying such operations to $M$, and its $i^{th}$ row is the first row that is not linearly independent to the previous $i-1$ rows. $ $ \newline

		  If $i\le m$, then by property $\CP$, we know that if we delete both the $i^{th}$ row and the $\left(m+i\right)^{th}$ row from $M'$, then we decrease the rank of its row space by at least one. Since the $i^{th}$ row is in the span of the first $i-1$ rows, then we deduce that the $\left(m+i\right)^{th}$ row is linearly independent to the first $i-1$ rows. By Observation \ref{aa}\ref{aa1} we can swap the $i^{th}$ row with the $\left(m+i\right)^{th}$ row and still preserve property $\CP$. The first $i$ rows of the new matrix are linearly independent, which contradicts the maximality of $i$. $ $ \newline

		  If $k\ge i>m$, since  $\rank(M')=k$, then there exists $p$ with $2m\geq p>k$ such that the  $p^{th}$ row of $M'$ is not in the span of the first $k$ rows of $M'$. By Observation \ref{aa}\ref{aa2} we can swap the $i^{th}$ row with the $p^{th}$ row and the $\left(i-m \right)^{th}$ row with the $\left(p-m\right)^{th}$ row, creating a matrix whose first $i^{th}$ rows are linearly independent, which contradicts the maximality of $i$.

 		We conclude that
		$$\P(M_{m,q} \text{ has property } \CP \text{ and rank }k)\leq \P(M_{m,q} \text{ has property } \CF_k)2^{o(m^2)}.$$
		\end{proof}
	
\begin{lemma}\label{lemma3.4}
Let $M_{m,q}$ be a $2m \times q$ random matrix with Rademacher entries that take the values $\pm 1$ with probability $1/2$. We have
 	$$\P(M_{m,q}\text{ has property } \CF_k)\leq 2^{(2m-k)(k-m-q)+o(m^2)}.$$
\end{lemma}

\begin{proof}[Proof of Lemma \ref{lemma3.4}]	We start with some notation: for a matrix $M$, we write $M^{(i_1,...,i_j)}$ to denote the submatrix of $M$ created by removing its $i_1^{th},...,i_j^{th}$ rows. We also write $\rowsp\left(M\right)$ to denote the row space of a matrix $M$ and $\row_i(M)$ to denote the $i^{th}$ row of a matrix $M$.
$ $ \newline

Let us reveal the main idea behind the proof: we use a counting argument and we condition on the first $k$ rows of $M_{m,q}$. Define
	 $$K : = M_{m,q}^{(k+1,...,2m)}.$$ Note that if $M_{m,q}$ has property $\CF_k$, then it has rank $k$, hence all of its rows belong to the row space of $K$. Our argument is a slightly more sophisticated version of the above fact. $ $ \newline

\noindent \underline{\textbf{A condition argument}}. Let $M_{m,q}$ be a $2m \times q$ matrix with property $\CF_k$ and $j$ a positive integer such that $j\leq k$ and $k < j+m \leq 2m$. By property $\CF_k$, we have that the rank of $M_{m,q}^{(j,m+j)}$ is at most $k-1$. However,
 	$$ \rank \left( M_{m,q}^{(j,k+1,...2m)} \right) = \rank \left(K^{(j)} \right) = k-1, $$
\noindent which implies
	 $$\rowsp\left(M_{m,q}^{(j,m+j)}\right) = \rowsp\left(K^{(j)}\right)$$

\noindent or, equivalently,  

	\begin{equation}\label{new1}
	 \row_i(M_{m,q}) \in \rowsp\left( K^{(j)} \right) \text{ for any } k < i \leq 2m, \, i\not = m+j.
	 \end{equation}
\begin{observation} If $k<m$, by relation \eqref{new1} we have that $$\row_{2m}(M_{m,q}) \in \left( \bigcap_{j\leq k} \rowsp \left(K^{(j)}\right)\right) = \{ 0_{\R^q} \}$$
which is a contradiction. That is, $k\geq m$.
\end{observation}
	 
On the other hand, if $j\leq m$ and $j+m \leq k$, then the rank of $M_{m,q}^{(j,m+j)}$ could potentially be $k-2$ and we can not conclude that $ \row_i(M_{m,q}) \in \rowsp\left( K^{(j,j+m)} \right)$ for $ k < i \leq 2m$. However, note that by property $\CF_k$ 

	$$ \dim  \Span \left( \bigcup_{k < i \leq 2m} \row_i(M_{m,q})  \cup \rowsp \left( K^{(j,j+m)}\right) \right)  \leq k-1 = \dim \rowsp \left( K^{(j,j+m)} \right) + 1,$$
	
\noindent which means that if there exists $i_j$ such that $k<i_j\leq 2m$ and $\row_{i_j}(M_{m,q}) \not \in \rowsp \left(K^{(j,j+m)}\right)$, then  
	
		\begin{equation}\label{new2}
	 \row_i(M_{m,q}) \in \Span \left( \rowsp\left( K^{(j,m+j)} \right) \bigcup \row_{i_j}(M_{m,q})\right)\text{ for any } k < i \leq 2m, \, i\not = i_j.
	 \end{equation}
	
$ $ \newline
We further divide the matrices from $\CF_k$ into $(2m-k+1)^{m-k}$ categories $\CF_k^{(i_1,...,i_{k-m})}$, based on the smallest indices $(i_1, i_2, ..., i_{k-m})$  where $i_s > k$ for any $s\leq k-m$, where relation \eqref{new2} holds. We have
	
	\begin{equation}\label{new9}
	\row_i(M_{m,q}) \in \left\{
\begin{array}{ll}
      \Span \left( \rowsp\left( K^{(j,m+j)} \right) \bigcup \row_{i_j}(M_{m,q})\right)  & \text{ for } k < i \leq 2m, \, i > i_j. \\
      \Span \left( \rowsp\left( K \right) \right)  &  \text{ for } k < i \leq 2m, \, i = i_j \\
      \Span \left( \rowsp\left( K^{(j,m+j)} \right) \right) & \text{ for } k < i \leq 2m, \, i < i_j.
\end{array} 
\right. 
	\end{equation}
$ $ \newline

\noindent \underline{\textbf{Counting the matrices in  $\CF_k^{(i_1,...,i_{k-m})}$}}.  Fix $i > k$. We divide the set of indices $\{i_1,...,i_{k-m}\}$ into three subsets, $A_{i_>}$, $A_{i_=}$ and $A_{i_<}$, based on whether the indices are greater, equal or smaller then $i$. 
Equations \eqref{new1}  and \eqref{new2} imply that

	\begin{align}\label{new13}
	\row_i(M_{m,q})   \in   \bigcap_{\substack{k-m < j \leq  \min(k,m) \\ j \not = i-m}}  & \rowsp\left( K^{(j)} \right)  \bigcap_{ j  \in A_{i_<}}  \Span \left( \rowsp\left( K^{(j,m+j)} \right) \bigcup \row_{i_j}(M_{m,q})\right) \nonumber \\
				       & \qquad \bigcap_{ j  \in A_{i_>}}  \Span \left( \rowsp\left( K^{(j,m+j)} \right)\right) 
	\end{align}

	Let $x$ be a vector from the subspace of the right hand side of relation \eqref{new13} and let $x = \sum_{k} a_k \row_i(K)$ be the unique decomposition of $x$ in terms of the rows of $K$.

	Let $x \in  \bigcap_{ j  \leq k-m}   \Span \left( \rowsp\left( K^{(j,m+j)} \right) \bigcup \row_{i_j}(M_{m,q})\right) $ and let $x = \sum_{k} a_k \row_i(K)$ be the unique decomposition of $x$ in terms of the rows of $K$. 
	
	\begin{itemize}
		\item Let $j\in A_{i_>}$. Since $x \in  \Span \left( \rowsp\left( K^{(j,m+j)} \right) \right)$ we have $a_j = a_{j+m} = 0$ since the rows of $K$ are linearly independent. 
		\item Let $j\in A_{i_<}$. Since $x \in  \Span \left( \rowsp\left( K^{(j,m+j)} \right) \bigcup \row_{i_j}(M_{m,q})\right)$ we have:
			$$a_j \row_j(K) + a_{m+j}\row_{m+j}(K) = \row_{i_j}^{[j,m+j]}(M_{m,q}),$$
\noindent where by $\row_{i_j}^{[j,m+j]}(M_{m,q})$ we mean the projection of $a_j\row_j(K)+a_{m+j} \row_{m+j}(K)$ on $\row_{i_j}(M_{m,q})$.
	\end{itemize}
\noindent Thus, we have:
		\begin{equation}\label{new14}
			 \row_i(M_{m,q}) \in \Span \left(  \bigcup_{j\in A_{i_<}}  \row_{i_j} (M_{m,q}) \bigcup_{j \in A_{i_=}} \Big(\row_{j} (M_{m,q})\cup \row_{m+j} (M_{m,q})\Big) \bigcup \row_{i-m}(K) \right),
		\end{equation}
		which is a subspace of dimension at most $|A_{i_<} |+ |A_{i_=}|+1 = k-m +1 - |A_{i_>}|$. 
		
$ $\newline

\noindent We start by counting the matrices in  $\CF_k^{(i_1,...,i_{k-m})}$.

\begin{itemize}
	\item We can pick the elements from $K$ in $$2^{kq} \text{ ways. }$$ 
	\item By relation \eqref{new14} we can pick the elements from the $\row_i(M_{m,q})$, where $i > k$ in 
		$$2^{k-m+1 -|A_{i_>}|} \text{ ways. }$$
\end{itemize}

\noindent \underline{\textbf{Final counting}}. We conclude that:
	
	\begin{align*}
	 	\P(M_{m,q}\text{ has property } \CF_k) & \leq \sum_{(i_1,i_2,...,i_{k-m})} \P \left(M_{m,q} \text { has property } \CF_k^{(i_1,i_2,...,i_{k-m})} \right) \\
						& \leq (2m-k+1)^{m-k} \sup_{(i_1,i_2,...,i_{k-m})}  \P \left(M_{m,q} \text { has property } \CF_k^{(i_1,i_2,...,i_{k-m})} \right) \\
						& = 2^{o(m^2)}  \sup_{(i_1,i_2,...,i_{k-m})}  \frac{ 2^{\big( kq+(k-m+1)(2m-q)\big) -  \sum_{i}|A_{i_>}|}}{2^{2mq}} \\
						& \leq 2^{o(m^2)} \cdot 2^{ \big( kq+(k-m+1)(2m-q)\big) -  2mq} = 2^{(2m-q)(k-m-q)+o(m^2)}.
	\end{align*}
\end{proof}

\noindent \textit{Proof of Lemma \ref{Lemma3.3}.}
	\begin{align*}
	\P\left(M_n \in \CM_{k,t}\right) & \leq \sup_{T_t} \P\left(M_n\in \CM_{k,t}|T_t\right) \cdot \P\left(T_t \text{ has property } \CP \text{ and rank }k \right) \\
		&\leq \sup_{T_t} \P\left(M_n\in \CM_{k,t}|T_t\right) 2^{(2n-2t-k)(k-n)+o(n^2)},
	\end{align*}
	where the last step follows by Claim \ref{claim10} and Lemma \ref{lemma3.4}. Note that by the definition of $\alpha$,
	
	\begin{align*}
		\P\left(M_n\in \CM_{k,t}|T_t\right)  & \leq \sup_{T_t, M_n(<t;<t)} \Bigg( \P\big( M_n\in \CM_{k,t}|T_t, M_n(<t;<t) \big)\cdot \\
		&\qquad \cdot  \P\Big(M_n(<t;<t)\text{ is }M_n(<t;>t) M_n(<t;>t)^T- M_n(>t;<t)M_n(>t;<t)^T +\\
		& \hspace*{9cm} +C(<t;<t)\text{-normal}\Big) \Bigg)\\
		& \leq \sup_{T_t,M_n(<t;<t)} \left( \P\big( M_n\in \CM_{k,t}|T_t,M_n(<t;<t)\big)2^{-\alpha t^2 +o(n^2)}\right).
	\end{align*}

We keep the notation from Lemma \ref{lemma10}, that is
	$$D_i : = \{ M(i',i') \text{ where } 1\leq i' \leq n\} \cup \{ M(i',j') \text{ where either } i' \leq i  \text{ or } j' \leq i  \} .$$
By Lemma \ref{lemma10} we have
	\begin{align*}
	 \P\left( M_n\in \CM_{k,t}|T_t,M_n(\leq t,\leq t)\right) &\leq\sup_{D_t} \P\left( M_n\in \CM_{k,t}|D_t\right) \\
	&\leq 2^{-\left(\rank(T_t)+...+\rank(T_{2n-k-t})\right)-(k+t-n)^2}\\
	& \leq 2^{k^2/2+n^2+t^2-2nk+2kt-2nt+o(n^2)}.
	\end{align*}

\noindent Finally, we conclude
	\begin{align*}
	\P\left(M_n \in \CM_{k,t}\right) & \leq 2^{(2n-2t-k)(k-n)-\alpha t^2 +k^2/2+n^2+t^2-2nk+2kt-2nt+o(n^2)}\\
	& \leq 2^{(1-\alpha)t^2-k^2/2-n^2+nk+o(n^2)}.
	\end{align*}
\end{proof}

\subsection{Proof of Theorem 1.2 }\label{s3}

$ $

In this section, we put everything together to conclude the proof of Theorem 1.2 using a case analysis to improve the lower bound on $\alpha$. Recall that:
	$$ 2^{\left(-\alpha +o(1)\right)n^2} = \P(M_n \text{ is C-normal}) \geq \sup_{k,t} \P \left( M_n \in \CM_{k,t} \right),$$
thus, in order to improve the lower bound on $\alpha$ we improve the lower bound on $\sup_{k,t} \P \left( M_n \in \CM_{k,t} \right)$.
$ $\newline	
	
Define $f,g_1$ and $g_2$ by the following formulas: 
	\begin{align}
	&f(\alpha,n,k,t):=(1-\alpha)t^2-k^2/2-n^2+nk\label{111}\\
	&g_1(n,k,t):=t^2-3k^2+2kn+kt-2nt\label{222}\\
	&g_2(n,k,t):=n^2+k^2+t^2+kt-2kn-2nt\label{333}.
	\end{align}

By Lemma 3.1 and Lemma 3.3 we have
\begin{equation}\label{important2}
\P(M_n \in \CM_{k,t})\leq
\begin{cases}
 \min\big(2^{g_1(n,k,t)+o(n^2)},2^{f(\alpha,n,k,t)+o(n^2)}\big)  & \mbox{if }k\leq \frac {n}{2}\\
 \min\big(2^{g_2(,n,k,t)+o(n^2)} ,2^{f(\alpha,n,k,t)+o(n^2)}\big)  & \mbox{if }k\geq \frac {n}{2}.
\end{cases}
\end{equation}
	
	 For fixed $k$, both $g_1$ and $g_2$ are decreasing functions of $t$, while $f$ is increasing in $t$. It follows that the worst lower bound for $\alpha$ is achieved in one of the six boundary cases:
	$$\begin{cases}
	 	f(\alpha,n,k,t)=g_1(n,k,t), \,t=n-k/2 \text{ or } t=n-k & \mbox{when } k\leq n/2\\
	 	f(\alpha,n,k,t)=g_2(n,k,t), \,t=n-k/2 \text{ or } t=k & \mbox{when } k\geq n/2.
	\end{cases}$$

Since all the equations are homogeneous, we will assume that $n=1$ and we will analyze each of these extreme cases.

\begin{enumerate} [label=\bf{Case 1:}]
\item 
 $t=1-k$ and $k\leq 0.5$. Relations \eqref{111} and \eqref{222} become
			\begin{align*}
			f(\alpha,1,k,1-k) &= -\alpha -(1-2\alpha)k+(1-\alpha)k^2\\
			g_1(1,k,1-k) &= -1+3k-3k^2,
			\end{align*}
			\noindent which implies
			$$\alpha \geq \min_{k\in[0,0.5]} \left( \max \left(\frac{1-k}{2-k}, 1+3k^2-3k\right)\right)\geq 0.425.$$
\end{enumerate}

\begin{enumerate} [label=\bf{Case 2:}]
\item $t=1-k/2$ and $k\leq 0.5$. Relation  \eqref{222} implies
				$$-\alpha \leq g_1(1,k,1-k/2) =-1-13k^2/4+3k \leq -0.307.$$ 
\end{enumerate}

\begin{enumerate} [label=\bf{Case 3:}]
\item  $t=1-k/2$ and $k\geq0.5$. Relation \eqref{333} implies
				$$-\alpha \leq g_2(1,k,1-k/2)=\frac{3k^2}{4}-k \leq -0.3125 \text{ since }k\leq2/3.$$
\end{enumerate}

\begin{enumerate} [label=\bf{Case 4:}]
\item $t=k$ and $k\geq0.5$. Relations \eqref{111} and \eqref{333} become
			\begin{align*}
			f(\alpha,1,k,k) &= (1/2-\alpha)k^2+k-1\\
			g_2(1,k,k) &= 1+3k^2-4k,
			\end{align*}
			\noindent which implies
			$$\alpha \geq \min_{k\in[0.5,1]} \left(\max  \left( \frac{-1+k^2/2+k}{1-k^2}, 1+3k^2-4k\right)\right) \geq 0.323.$$
\end{enumerate}

\begin{enumerate} [label=\bf{Case 5:}]
\item $f (\alpha,1,k,t)= g_1(1,k,t)$ and $k \leq 0.5$. 
			Since $f (\alpha,1,k,t)= g_1(1,k,t)$ we get
				$$ k = \frac{t+1+\sqrt {(t+1)^2-5(4t-2-2\alpha t^2)}}{5}.$$
			Hence,
			$$ \alpha \geq \min_{t} f\left(\alpha,1,\frac{t+1+\sqrt {(t+1)^2-5(4t-2-2\alpha t^2)}}{5},t\right),$$
			which leads to 	
				$$\alpha \geq 0.302.$$
\end{enumerate}

\begin{enumerate} [label=\bf{Case 6:}]
	\item $f(\alpha,1,k,t)=g_2(1,k,t)$ and $k \geq 0.5$. Since $f(\alpha,1,k,t)=g_2(1,k,t)$ we get
		$$ k = \frac{3-t - \sqrt {(t-3)^2 - 3 (4-4t+2\alpha t^2)} }{3}.$$
		Hence,
		$$\alpha \geq \min_t f\left(\alpha,1,\frac{3-t - \sqrt {(t-3)^2 - 3 (4-4t+2\alpha t^2)} }{3},t\right),$$
		which leads to 
		$$ \alpha \geq 0.307.$$
\end{enumerate}

		It follows immediately from the above cases that
		$$\nu_n  \leq   2^{-\left(0.302+o(1)\right)n^2}.$$

\noindent \textbf{Acknowledgment.} We would like to thank the referees for their helpful comments.

$ $

\end{document}